\theoremstyle{plain}
\newtheorem{theorem}{Theorem}[section]
\newtheorem{corollary}[theorem]{Corollary}
\newtheorem{lemma}[theorem]{Lemma}
\theoremstyle{definition}
\newtheorem{remark}[theorem]{Remark}
\newtheorem{definition}[theorem]{Definition}
\newcommand{\bbN}{\mathbb{N}}
\newcommand{\bbR}{\mathbb{R}}
\newcommand{\floor}[1]{\lfloor #1 \rfloor}
\newcommand{\FLOOR}[1]{\left\lfloor #1 \right\rfloor}
\newcommand{\CEIL}[1]{\left\lceil #1 \right\rceil}
\newcommand{\FJADEF}[3]{{#1}:{#2}\to{#3}}
\numberwithin{equation}{section}
\definecolor{Maroon}{RGB}{140,10,0}
\newcommand{\e}{\ensuremath{\text{\textopenbullet}}}
\newcommand{\f}{\ensuremath{\text{\textbullet}}}
\title{Complexity Function of Jammed Configurations of Rydberg Atoms}
\author[T.\ Do\v{s}li\'{c}]{Tomislav\ Do\v{s}li\'{c}}
\address[Tomislav\ Do\v{s}li\'{c}]{Department of Mathematics\\
	Faculty of Civil Engineering\\
	University of Zagreb\\
	Zagreb\\
	Croatia \\ and
	Faculty of Information Studies \\
	Novo Mesto \\
	Slovenia}
\email{tomislav.doslic@grad.unizg.hr}
\author[M.\ Puljiz]{Mate\ Puljiz}
\address[Mate Puljiz]{Department of Applied Mathematics\\
	Faculty of Electrical Engineering and Computing\\
	University of Zagreb\\
	Zagreb\\
	Croatia}
\email{mate.puljiz@fer.hr}
\author[S.\ \v{S}ebek]{Stjepan\ \v{S}ebek}
\address[Stjepan\ \v{S}ebek]{Department of Applied Mathematics\\
	Faculty of Electrical Engineering and Computing\\
	University of Zagreb\\
	Zagreb\\
	Croatia}
\email{stjepan.sebek@fer.hr}
\author[J.\ \v{Z}ubrini\'{c}]{Josip\ \v{Z}ubrini\'{c}}
\address[Josip\ \v{Z}ubrini\'{c}]{Department of Applied Mathematics\\
	Faculty of Electrical Engineering and Computing\\
	University of Zagreb\\
	Zagreb\\
	Croatia}
\email{josip.zubrinic@fer.hr}
\subjclass[2020]{
		82B20, 
		82C20, 
		05B40, 
		05A15, 
		05A16,  
}
\keywords{dynamic lattice systems, equilibrium lattice systems, complexity function, configurational entropy, jammed configuration, maximal packing, Rydberg atoms}
\begin{document}

\begin{abstract}
In this article, we determine the complexity function (configurational entropy) of jammed configurations of Rydberg atoms on a one-dimensional lattice. Our method consists of providing asymptotics for the number of jammed configurations determined by direct combinatorial reasoning. In this way we reduce the computation of complexity to solving a constrained optimization problem for the Shannon's entropy function. We show that the complexity can be expressed explicitly in terms of the root of a certain polynomial of degree $b$, where $b$ is the so-called blockade range of a Rydberg atom. Our results are put in a relation with the model of irreversible deposition of $k$-mers on a one-dimensional lattice.

\end{abstract}

\maketitle


%
%
%
%
\section{Introduction}

\emph{Rydberg atom} is a name given to an atom which has been excited into
a high energy level so that one of its electrons is able to travel much
farther from the nucleus than usual (up to $10^6$ times more, see \cite{Dunning_Killian_2021}).
In physics community, Rydberg atoms have been intensely studied and have
become a testing ground for various quantum mechanical problems in quantum
information processing, quantum computation and quantum simulation
\cite{36_saffman2010quantum}. See \cite{gallagher_1994} for a comprehensive
description of the physics of Rydberg atoms and their remarkable properties.
Due to their large size, Rydberg atoms can exhibit very large electric dipole
moments which results in strong interactions between two close Rydberg atoms.
This causes a blockage effect that prohibits the excitation of an atom
located close to an atom that is already excited to a Rydberg state
\cite{37_jaksch2000fast, 38_liebisch2005atom, 39_pohl2010dynamical,
40_viteau2012cooperative, 41_hofmann2013sub, 42_bernien2017probing}.
The simplest setting for studying Rydberg atoms and their blockage effect
is on a finite one-dimensional lattice. In this setting, each atom occupies
one site and each two excited atoms are at least $b\ge 1$ sites apart. The
positive integer $b$ is referred to as the \emph{blockade range} of the
model. We will be interested in maximal (or \emph{jammed}) configurations
where no further atoms can be excited. Note that in such a configuration
each two excited atoms are at most $2b$ sites apart. In physics literature, jammed states in similar deposition models have the interpretation of metastable states at low
enough temperature and/or high enough density, and are referred to as
valleys, pure states, quasi-states, and inherent structures
\cite{1_thouless1977solution, 2_kirkpatrick1978infinite, 3_mezard1987spin,
4_gotze1992relaxation, 5_biroli2000inherent, 6_debenedetti2001supercooled,
7_berthier2011theoretical}.

These kinds of models are usually studied in the context of random sequential
adsorption (RSA). Initially all atoms are in the ground state, and are
excited sequentially, at random, until a jammed configuration is reached.
Of most interest is the \emph{jamming limit}, which is defined as the
expected density of excited atoms in the jammed configuration. This dynamical
version of the problem was already studied in literature. In
\cite[\S IV]{46_krapivsky2020large} it was found that the jamming limit is
\begin{equation*}
\rho_{\infty}^{b\text{-Ryd}} = \int_0^1 \exp\left[-2\sum_{j=1}^{b} \frac{1-y^j}{j} \right] dy.
\end{equation*}
The jamming limit was also computed for an equivalent model of deposition of
linear polymers ($k$-mers) in \cite[\S 7.1]{30_krapivsky2010kinetic}
\begin{align*}
\rho_{\infty}^{k\text{-mer}} = k\int_0^\infty \exp\left[-u-2\sum_{j=1}^{k-1} \frac{1-e^{-ju}}{j} \right]du = k\int_0^1 \exp\left[-2\sum_{j=1}^{k-1} \frac{1-y^j}{j} \right]dy.
\end{align*}
The equivalence of models is reflected in the fact that
$\rho_{\infty}^{k\text{-mer}} = k\cdot\rho_{\infty}^{b\text{-Ryd}}$ for $b=k-1$.

In this context, the jammed configurations are sometimes called attractors,
and in the present article, we will be interested in the number of those
attractors and in details of their structure, above all in their density.

It is known that in similar models, the number of different jammed
configurations with density $0\le \rho \le 1$ tends to grow exponentially
with the length $L$ of configuration, see \cite{10_cornell1991domain,
11_de2003metastable, 12_derrida1986metastable, 13_masui1989metastable,
14_fredrickson1984kinetic, 15_jackle1991hierarchically, 16_sollich1999glassy,
17_crisanti2000inherent, 18_dean2001tapping, 19_dean2001steady,
20_lefevre2001tapping, 21_prados2001analytical, 22_de2002jamming,
23_palmer1985low, 24_elskens1987aggregation, 25_privman1992exact,
26_lin1993exact, 27_krapivsky1994kinetic}. Denoting this number by
$J_L(\rho)$, it is common to describe it using the so-called complexity
function (also called configurational entropy) $f(\rho)$ for which it holds
that $J_L(\rho)\sim e^{Lf(\rho)}$. It turns out (see e.g.\ Figure
\ref{fig:delta_Rydberg_separate}) that the density $\rho_\star^{b\text{-Ryd}}$
maximizing the complexity function is slightly different than the expected
density (jamming limit) of the dynamical model. This falsifies the flatness
hypothesis formulated by Edwards, see \cite{35_baule2018edwards} for a
recent review. Note that $\rho_\star^{b\text{-Ryd}}$ is the limit
(as $L$ tends to infinity) of the most probable densities in the equilibrium
models that assign equal probabilities to all jammed configurations.

Our main goal is to compute the complexity $f(\rho)$ of jammed configurations of Rydberg atoms using direct combinatorial reasoning. The problem reduces to solving a constrained optimization problem for the Shannon's entropy function. We show that the complexity function can be expressed explicitly in terms of the root of a certain polynomial of degree $b$. This work has been carried out simultaneously with \cite{KL}. The authors there introduce a novel method for determining the same complexity function. Their method is inspired by the theory of renewal processes.

\bigskip

The described model of Rydberg atoms on a one-dimensional lattice is equivalent to a number of other models already present in the literature. The case $b=1$ is the famous model introduced by Flory \cite{57_flory1939intramolecular} describing the mechanism of vinyl polymerization. This is in turn essentially equivalent to the Page-R\'enyi car parking problem \cite{Page, GerinPRparkingHAL} (which is a discrete version of the famous model introduced by R\'enyi in \cite{58_renyi1958one}) describing the jammed configurations of cars of length $2$. The equivalence is obtained by replacing each excited atom with a car taking up both the atom's and its right neighbor's site. Clearly, this only works for configurations not having an excited atom at the rightmost site. This means that the total number of jammed configurations is actually different in these two models, but only up to a constant factor, which does not affect the shape of the complexity function of these models. In chemistry, this model appears in the context of the irreversible deposition of dimers \cite{57_flory1939intramolecular}, and in graph theory, the jammed configurations correspond to maximal matchings in a path graph, see \cite{DoslicZubac}.

Similarly, the general case $b>1$ corresponds to irreversible deposition of $k$-mers ($k=b+1$) in a linear polymer of length $L$. The equivalence (again, up to a constant factor) is obtained by replacing each excited atom with a polymer taking up $b+1$ consecutive sites, starting from the atom's position, see Figure \ref{fig:RydkmerEq}. In this, and all the following figures, bullets ($\f$) represent Rydberg atoms (in the Rydberg model) or occupied sites (in the $k$-mer deposition model), while empty bullets ($\e$) represent neutral atoms (in the Rydberg model) or vacant sites (in the $k$-mer deposition model). Notice that the gaps between adjacent $k$-mers in jammed configurations of this deposition model are of size at most $k-1$. This equivalence allows us to easily transfer our results on Rydberg atoms to the setting of $k$-mer deposition model. The problem of irreversible deposition of $k$-mers was extensively studied in the literature, see \cite{28_evans1993random,29_talbot2000car,30_krapivsky2010kinetic,46_krapivsky2020large,54_gonzalez1974cooperative,55_bartelt1993car,56_bonnier1994pair}.
\begin{figure}
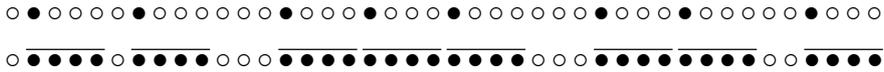

	\begin{gather*}
		\e\,\f\,\e\,\e\,\e\,\e\,\f\,\e\,\e\,\e\,\e\,\e\,\e\,\f\,\e\,\e\,\e\,\f\,\e\,\e\,\e\,\f\,\e\,\e\,\e\,\e\,\e\,\e\,\f\,\e\,\e\,\e\,\f\,\e\,\e\,\e\,\e\,\e\,\f\,\e\,\e\,\e\\
		\e\,\overline{\f\,\f\,\f\,\f}\,\e\,\overline{\f\,\f\,\f\,\f}\,\e\,\e\,\e\,\overline{\f\,\f\,\f\,\f}\,\overline{\f\,\f\,\f\,\f}\,\overline{\f\,\f\,\f\,\f}\,\e\,\e\,\e\,\overline{\f\,\f\,\f\,\f}\,\overline{\f\,\f\,\f\,\f}\,\e\,\e\,\overline{\f\,\f\,\f\,\f}
	\end{gather*}
	\caption{A jammed configuration of Rydberg atom model with blockade range $b$ and the corresponding jammed configuration of the $k$-mer deposition model when $b=3$, $k=4$.}\label{fig:RydkmerEq}
\end{figure}

In graph theory, the $k$-mer deposition model is equivalent to $P_k$-packings of a path graph $P_L$, and jammed configurations in the former correspond to maximal packings in the latter. The maximal $P_k$-packings of $P_L$ were previously studied in \cite{Doslic}.

Another equivalent formulation of the Rydberg atom model appeared recently in \cite[\S3.2.1]{DPSZ} where the authors of the present paper considered the settlement model consisting of $k$-story buildings on a one-dimensional tract of land. The tract of land is oriented east-west and each story of each building has to receive the sunlight from both east and west.

\bigskip

The rest of the paper is organized as follows. In Section
\ref{sectionjammedconfigurations} we calculate the asymptotics for the number
of jammed configurations in the model of Rydberg atoms, which is expressed
in terms of the maximum of the Shannon's entropy function over a certain
finite set determined by the constraints of the model. In Section
\ref{sectioncomplexity} we use these results in order to obtain the formula
for the associated complexity function. We derive the expression for the
complexity $f(\rho)$ which, for a chosen density $\rho$, depends explicitly
on a positive root of a certain polynomial whose degree coincides with the
blockade range of the model. Further on, in Section
\ref{sectioncomplexitykmers}, we put our findings in relation with the model
for the deposition of $k$-mers on the linear polymer and draw conclusions
from the obtained results. There, we also provide some results on the
qualitative properties of the maximizers of mentioned complexity functions,
for various blockade ranges $b$, and put them in comparison with their
counterparts in the theory of RSA. Finally, in Section \ref{sec:concluding} we recapitulate our findings and indicate several possible directions of future research.

\bigskip

\subsection*{Notation} We write $M_L\sim N_L$ if the two positive sequences $(M_L)_L$ and $(N_L)_L$ have the same growth, as $L\to \infty$, up to a sub-exponential factor, i.e.\ if $$\lim\limits_{L\to \infty} \dfrac{\ln M_L - \ln N_L}{L} = 0.$$

\section{Jammed configurations of Rydberg atoms}
\label{sectionjammedconfigurations}

As already stated in the introduction, the main goal of this paper is to compute the complexity function $f(\rho)$ of jammed configurations of Rydberg atoms. Crucial step towards obtaining a complexity function of such models in general is to inspect the set of all jammed configurations of a model. Each configuration is a binary 0/1 sequence which we sometimes interpret as a sequence of empty/occupied sites or, in Rydberg model, as neutral/excited atoms. The total number of all configurations of length $L$ in the model is denoted by $J_L$. The total number of configurations of length $L$ consisting of $N$ ones (occupied sites, excited atoms) is denoted by $J_{N,L}$. The \emph{density (saturation, coverage)} of any such configuration of length $L$ with $N$ ones is defined as $N/L\in[0,1]$.

In order to determine the complexity function, it is not enough to work only with $J_L$. We need to be more precise. We need to know the behavior of the number of different jammed configurations of length $L$, where precisely $N$ atoms are excited to the Rydberg state. The main result of this section (see Lemma \ref{lm:asympt}) provides asymptotics of the quantity $J_{N, L}$ for Rydberg atom model.

Let us first consider several concrete examples of jammed configurations of our model to get a better feeling of their possible shapes. Figure \ref{fig:examples_L=16} displays three different jammed configurations in the chain of $L = 16$ atoms, where the blockade range $b$ is equal to two, i.e.\ each two excited atoms are at least two sites apart.
\begin{figure}
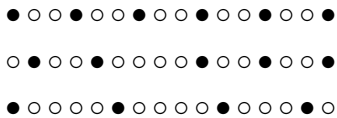

	\begin{gather*}
		\f\,\e\,\e\,\f\,\e\,\e\,\f\,\e\,\e\,\f\,\e\,\e\,\f\,\e\,\e\,\f\\
		\e\,\f\,\e\,\e\,\f\,\e\,\e\,\e\,\e\,\f\,\e\,\e\,\f\,\e\,\e\,\f\\
		\f\,\e\,\e\,\e\,\e\,\f\,\e\,\e\,\e\,\e\,\f\,\e\,\e\,\e\,\f\,\e
	\end{gather*}
	\caption{Three jammed configurations in the chain of $L = 16$ atoms with blockade range $b = 2$. The number of Rydberg atoms in these configurations is $N = 6, 5, 4$ (from top to bottom).}\label{fig:examples_L=16}
\end{figure}
Since Rydberg atoms in a jammed configuration are separated by clusters of empty sites whose length is at least $b$ (so that the constraint imposed by the blockage effect is satisfied), and at most $2b$ (since we can excite another atom in the middle of an empty range of size $2b + 1$, hence such a configuration would not be jammed), it is easy to see that it holds
\begin{equation}\label{eq:bounds_for_N}
	\CEIL{\frac{L}{2b+1}}\le N \le \CEIL{\frac{L}{b+1}},
\end{equation}
where $N$ is the number of excited atoms, $L$ is the length of the configuration, and $b$ is the blockade range. In the particular case of $L = 16$ and $b = 2$, this implies that $4 \le N \le 6$. Hence, Figure \ref{fig:examples_L=16} shows one jammed configuration for each possible value of $N$. Notice that relation \eqref{eq:bounds_for_N} implies that
\begin{equation}\label{eq:bounds_for_N/L}
	\frac{1}{2b+1} - \frac{1}{L} < \frac{N}{L}\le \frac{1}{b+1},
\end{equation}
and this in turn implies that in the limit, as $L\to\infty$, the density $\rho = N/L$, of Rydberg atoms in jammed configurations, lies within the bounds
\begin{equation}\label{eq:bounds_for_rho}
	\frac{1}{2b+1} \le \rho \le \frac{1}{b+1}.
\end{equation}

As a first result in the direction of better understanding the double sequence $J_{N, L}$ for Rydberg atom model, we provide the bivariate generating function for this sequence in the general case of blockade range $b \ge 1$.
\begin{lemma}\label{lm:BGF}
	The bivariate generating function of the sequence $J_{N, L}$ associated with jammed configurations of Rydberg atoms, when the blockade range is equal to $b$, is given by
	\begin{equation*}
		F_b(x, y) = \frac{(1-y)^2 + xy - xy^{b+1} - xy^{b+2} + xy^{2b+2}}{(1-y) (1 - y - xy^{b+1} + xy^{2b+2})},
	\end{equation*}
	where $x$ is a formal variable associated with the number of atoms excited to the Rydberg state, and $y$ is a formal variable associated with the length of the configuration.
\end{lemma}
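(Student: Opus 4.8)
The plan is to set up a bijective block decomposition of each jammed configuration and then translate it into generating functions via the symbolic method.

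First I would describe the anatomy of a jammed configuration with $N \ge 1$ excited atoms. Reading the binary string from left to right, it consists of a leading run of empty sites, then the first excited atom, then alternating (internal gap of empty sites, excited atom) pairs, and finally a trailing run of empty sites. The blockade constraint forces every internal gap to have length at least $b$, while jamming (maximality) forbids any internal gap of length $\ge 2b+1$, since then a new atom could be inserted in its middle; hence internal gaps have length in $\{b, b+1, \ldots, 2b\}$, as already noted in the paragraph preceding \eqref{eq:bounds_for_N}. For the two boundary runs, jamming forbids a leading (resp.\ trailing) run of length $\ge b+1$, since then an atom could be excited at the extreme site without violating the blockade; hence each boundary run has length in $\{0, 1, \ldots, b\}$. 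Because the positions of the excited atoms and the runs between them are determined by the string, this decomposition is a bijection between jammed configurations with $N \ge 1$ atoms and such block sequences.

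Next, I would pass to generating functions, marking each excited atom by $xy$ (it contributes $1$ to $N$ and $1$ to $L$) and each empty site by $y$. A boundary run contributes $A(y) := \sum_{i=0}^{b} y^i = \frac{1 - y^{b+1}}{1-y}$, while an internal gap contributes $\sum_{g=b}^{2b} y^g = y^b A(y)$. Using the geometric (sequence) construction for the alternating internal part, the total contribution of configurations with exactly $N \ge 1$ atoms is $A(y)^2 \cdot (xy) \cdot \big( xy \cdot y^b A(y) \big)^{N-1}$, the two outer factors $A(y)$ accounting for the leading and trailing runs.

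Finally, I would sum the geometric series over $N \ge 1$ and add the constant term $1$ for the empty configuration ($N = 0$, $L = 0$, the only jammed configuration with no atoms), obtaining
$$F_b(x,y) = 1 + \frac{A(y)^2 \, xy}{1 - xy^{b+1} A(y)}.$$
Substituting $A(y) = \frac{1-y^{b+1}}{1-y}$ and clearing denominators reduces this to the claimed closed form; the only genuine computation is the algebraic simplification, in which the numerator
$$(1-y)\big(1 - y - xy^{b+1} + xy^{2b+2}\big) + xy(1-y^{b+1})^2$$
collapses to $(1-y)^2 + xy - xy^{b+1} - xy^{b+2} + xy^{2b+2}$. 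The conceptual content lies entirely in the decomposition, which is routine; the step most prone to error is the bookkeeping at the two boundaries — distinguishing the boundary runs (allowed to be empty, capped at $b$) from the internal gaps (bounded below by $b$) and remembering the degenerate empty configuration contributing the additive constant $1$.
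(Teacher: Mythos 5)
Your proof is correct and follows essentially the same route as the paper: a block decomposition of jammed configurations (boundary runs of length $0$ to $b$, internal gaps of length $b$ to $2b$) translated into generating functions and summed as a geometric series, yielding the identical intermediate identity $F_b(x,y) = 1 + \frac{xy\,A(y)^2}{1 - xy^{b+1}A(y)}$ with $A(y)=1+y+\dots+y^b$. The only cosmetic difference is that the paper groups each excited atom with its following gap into a single building block $p_b(x,y)=xy^{b+1}+\dots+xy^{2b+1}$ (plus a truncated ending block), whereas you mark atoms and gaps separately, which is the same factorization rearranged.
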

\begin{proof}
	As already mentioned, configurations of Rydberg atoms can be represented as $0/1$ sequences. Due to the fact that we can determine whether the blockage effect has been taken into account, and whether the configuration represented with such a sequence is jammed, just by inspecting finite size patches of a given sequence, we can apply the so-called transfer matrix method (see \cite[\S 4.7]{Stanley} or
\cite[\S V]{FlajoletSedgewick}, and also \cite[\S 2--4]{SymbDynCoding}). This is a well known method for counting words of a regular language. Since Rydberg atoms in a jammed configuration are separated with at least $b$, and at most $2b$ neutral atoms, every jammed configuration will be composed of blocks that start with a Rydberg atom and then have a cluster of neutral atoms of length between $b$ and $2b$. Such blocks are displayed in Figure \ref{fig:building_blocks}.
\begin{figure}
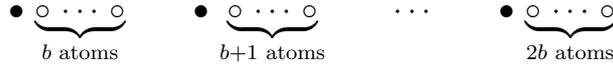

	\begin{gather*}
		\f\, \underbrace{\e\, \cdots \, \e}_{b \textnormal{ atoms}} \qquad \f\, \underbrace{\e\, \cdots \, \e}_{b+1 \textnormal{ atoms}} \qquad \cdots \qquad \f\, \underbrace{\e\, \cdots \, \e}_{2b \textnormal{ atoms}}
	\end{gather*}
	\caption{Building blocks of jammed configurations of Rydberg atoms with blockade range $b$.}\label{fig:building_blocks}
\end{figure}
These building blocks are encoded with the polynomial
	\begin{equation*}
		p_b(x, y) = xy^{b+1} + xy^{b+2} + \cdots + xy^{2b+1}.
	\end{equation*}
	Now we only need to take care of the beginning and the end of jammed configurations. Notice that in front of the first block we can have some neutral atoms. More precisely, the number of neutral atoms that can appear at the left end of the jammed configuration is between $0$ and $b$. These starting blocks are encoded with the polynomial
	\begin{equation*}
		s_b(x, y) = 1 + y + y^2 + \cdots + y^{b}.
	\end{equation*}
	Similarly, after the last block from the set of blocks shown in Figure \ref{fig:building_blocks} (if there are any, i.e.\ if we want to have more than just one atom in the Rydberg state), we need to have a block that again starts with a Rydberg atom, and then has a cluster of neutral atoms of length between $0$ and $b$. These ending blocks are encoded with the polynomial
	\begin{equation*}
		e_b(x, y) = xy + xy^2 + \cdots + xy^{b+1}.
	\end{equation*}
	Notice that each of the blocks shown in Figure \ref{fig:building_blocks} can be glued to any other block listed in this figure. This implies that we do not even need to work with powers of the transfer matrix, but we can directly take powers of the polynomial $p_b(x, y)$ in order to obtain the desired bivariate generating function. A simple calculation gives
	\begin{align*}
		F_b(x, y)
		& = 1 + \sum_{n = 0}^{\infty} s_b(x, y) \cdot p_b(x, y)^n \cdot e_b(x, y) \\
		& = 1 + \frac{s_b(x, y) \cdot e_b(x,y)}{1 - p_b(x, y)} \\
		& = \frac{(1-y)^2 + xy - xy^{b+1} - xy^{b+2} + xy^{2b+2}}{(1-y) (1 - y - xy^{b+1} + xy^{2b+2})}.
	\end{align*}
\end{proof}
\begin{remark}
By using the same technique, we can easily compute the bivariate generating function enumerating the number of jammed configurations of prescribed length, and with some fixed number of occupied sites, in the $k$-mer deposition model. The building blocks here are composed of a cluster of $k$ consecutive sites occupied by a single $k$-mer, followed by a cluster of empty sites of length between $0$ and $k-1$ (see Figure \ref{fig:building_blocks_k-mers}).
	\begin{figure}
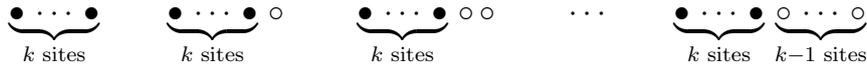

		\begin{gather*}
			\underbrace{\f\, \cdots \, \f}_{k \textnormal{ sites}} \qquad \underbrace{\f\, \cdots \, \f}_{k \textnormal{ sites}}\,\e\, \qquad \underbrace{\f\, \cdots \, \f}_{k \textnormal{ sites}}\,\e\,\e\, \qquad \cdots \qquad \underbrace{\f\, \cdots \, \f}_{k \textnormal{ sites}}\, \underbrace{\e\, \cdots \, \e}_{k-1 \textnormal{ sites}}
		\end{gather*}
		\caption{Building blocks of jammed configurations of $k$-mer deposition model.}\label{fig:building_blocks_k-mers}
	\end{figure}
	These building blocks are encoded with a polynomial
	\begin{equation*}
		p_k(x, y) = x^ky^k + x^ky^{k+1} + \cdots + x^ky^{2k-1},
	\end{equation*}
where $x$ is again a formal variable associated with the number of occupied sites, and $y$ is a formal variable associated with the length of a configuration. Similarly as in the case of the Rydberg atom model, at the left end of a jammed configuration, we can have a cluster of vacant sites of length between $0$ and $k-1$. These starting blocks are encoded with the polynomial
	\begin{equation*}
		s_k(x, y) = 1 + y + y^2 + \cdots + y^{k-1}.
	\end{equation*}
It is clear that we can end a jammed configuration with any of the building blocks shown in Figure \ref{fig:building_blocks_k-mers}, so we can set $e_k(x, y) = 1$. Using again the fact that each of the blocks from Figure \ref{fig:building_blocks_k-mers} can be glued to any other block listed in that figure, we can work directly with powers of the polynomial $p_k(x, y)$ to obtain
	\begin{equation}\label{eq:BGF_k-mers}
		F_k(x, y) = \sum_{n = 0}^{\infty} a_k(x, y) \cdot p_k(x, y)^n = \frac{a_k(x, y)}{1 - p_k(x, y)} = \frac{1 - y^k}{1 - y - x^ky^k + x^ky^{2k}}.
	\end{equation}
	Notice that we are not adding $1$ to the bivariate generating function in \eqref{eq:BGF_k-mers}. The reason is that starting with a cluster of $0$ vacant sites and setting $n = 0$ already counts the empty configuration.
\end{remark}
The sequence $J_{N, L}$ has already been studied in the literature, but in the context of maximal $P_k$-packings of a path graph $P_L$ (see \cite{Doslic}). The bivariate generating function enumerating the total number of maximal $k$-packings in $P_L$, with exactly $N$ copies of $P_k$, is given in \cite[Corollary 2.4]{Doslic}, and the only difference between that bivariate generating function and the one we obtained in \eqref{eq:BGF_k-mers}, is that $x$ is not raised to power $k$. The reason is that the author in \cite{Doslic} is interested in the number of copies of $P_k$ (i.e.\ the number of deposited $k$-mers) in jammed configurations, and we are interested in the total number of sites occupied by those deposited $k$-mers. The bivariate generating function from \eqref{eq:BGF_k-mers} is also obtained in \cite[formula (5.3)]{KL}, where authors use a novel approach inspired by the theory of renewal processes. Using the same technique, they also obtain the bivariate generating function which coincides with the one we obtained in Lemma \ref{lm:BGF}, which enumerates the total number of jammed configurations of length $L$ of Rydberg atoms with blockade range $b$, with precisely $N$ excited atoms (see \cite[formula (6.5)]{KL}).

It is easy to see from the bivariate generating function from Lemma \ref{lm:BGF} that, for $b = 2$, $J_{16} = 96$ (i.e.\ there are $96$ jammed configurations in the chain of $L = 16$ atoms, when the blockade range is $b = 2$). Out of those $96$ jammed configurations, $45$ of them have $4$ Rydberg atoms ($J_{4, 16} = 45$), $50$ of them have $5$ Rydberg atoms ($J_{5, 16} = 50$), and only one has $6$ Rydberg atoms ($J_{6, 16} = 1$). This particular one is exactly the first jammed configuration shown in Figure \ref{fig:examples_L=16}.

We could now proceed like the authors in \cite{KL} and use the bivariate generating function developed in Lemma \ref{lm:BGF} to obtain the complexity function of jammed configurations of Rydberg atoms by means of the Legendre transform. However, we will use a direct combinatorial argument. To this end, we introduce a slightly different way of counting jammed configurations in the Rydberg model with blockade range $b$, than the one introduced in Lemma \ref{lm:BGF}. Denote with $B$ the block of $b+1$ adjacent atoms where only the first one is excited to the Rydberg state (see Figure \ref{fig:def_of_B}).
\begin{figure}
	\begin{gather*}
		B = \f\, \underbrace{\e\,\e\, \cdots \, \e}_{b \textnormal{ atoms}}
	\end{gather*}
	\caption{Block consisting of $b+1$ adjacent atoms where only the first one is excited to the Rydberg state.}\label{fig:def_of_B}
\end{figure}
Using again the fact that each two Rydberg atoms have at least $b$ and at most $2b$ neutral atoms separating them, it is clear that every jammed configuration consists of blocks $B$ separated by clusters of neutral atoms of length $0 \le a \le b$ (see Figure \ref{fig:jammed_conf}).
\begin{figure}
	\begin{gather*}
		\underbrace{\e\, \cdots \, \e}_{a_1 \textnormal{ atoms}}\, B\, \underbrace{\e\, \cdots \, \e}_{a_2 \textnormal{ atoms}}\, B\, \underbrace{\e\, \cdots \, \e}_{a_3 \textnormal{ atoms}}\, B\,\cdots B\,\underbrace{\e\, \cdots \, \e}_{a_{N} \textnormal{ atoms}}\, B\
	\end{gather*}
	\caption{The shape of jammed configurations in Rydberg model with blockade range $b$ and exactly $N$ Rydberg atoms, ending with a block $B$ (displayed in Figure \ref{fig:def_of_B}). Gaps between blocks $B$, and in front of the first block $B$, consist of neutral atoms and are of length $0 \le a_i \le b$.}\label{fig:jammed_conf}
\end{figure}
Denote by $M_a$ the number of gaps with $a$ neutral atoms. The total number of jammed configurations of the shape shown in Figure \ref{fig:jammed_conf}, with $L$ atoms in total, out of which precisely $N$ atoms are excited to the Rydberg state, is given as
\begin{equation}\label{eq:multinom_coef}
	\binom{N}{M_0, M_1, \ldots, M_b} = \frac{N!}{\prod_{0 \le a \le b} M_a!},
\end{equation}
with $M_a$ satisfying
\begin{align}\label{eq:N_gaps}
	\sum_{a = 0}^b M_a & = N,\\\label{eq:L_atoms}
	\sum_{a = 0}^b aM_a & = L - (b + 1)N.
\end{align}
The constraint \eqref{eq:N_gaps} expresses that the total number of gaps is $N$. Notice that we have $N$ blocks $B$ (since we want to have precisely $N$ Rydberg atoms), and that gaps of size $0 \le a \le b$ can be added in front of the first block $B$, and between each two blocks $B$. The constraint \eqref{eq:L_atoms} implies that the total number of neutral atoms is $L - N$. Clearly we need $L - N$ neutral atoms in addition to $N$ Rydberg atoms to have a configuration of length $L$. Equation \eqref{eq:multinom_coef} accounts for the jammed configurations ending precisely on $B$. There are also jammed configurations where the last block $B$ is truncated, and there are only $0\le c<b$ neutral atoms after the last atom excited to the Rydberg state. The contribution of such jammed configurations to the value of $J_{N, L}$ is comparable to \eqref{eq:multinom_coef}, but since complexity function ignores sub-exponential factors, it  suffices to determine the asymptotics of the sum
\begin{equation}\label{eq:24}
	J_{N,L} \sim \sum_{(M_0,M_1,\dots,M_b) \in R_{N,L}} \binom{N}{M_0, M_1, \dots, M_b},
\end{equation}
where
\begin{multline}\label{eq:def_of_RNL}
	R_{N,L} = \{(M_0,M_1,\dots,M_b) \in \bbN_0^{b+1}:  M_0+M_1+\dots+M_b = N \text{ and }\\  M_1 + 2M_2+\dots+bM_b = L - (b+1)N\}.
\end{multline}
We write $H$ for the Shannon's entropy function given as
\begin{equation}\label{eq:Shannon_entropy}
	H(p_0,p_1,\dots,p_b) = -\sum_{i=0}^{b} p_i \ln p_i,
\end{equation}
where $p_i\ge0$, for $0\le i\le b$, and $p_0+p_1+\dots+p_b=1$.
\begin{remark}
	In case $p_i=0$ for some $i$, we set $0\cdot \ln 0 =0$.
\end{remark}

The following lemma is the key result of this section, and it constitutes a crucial step in computing the complexity function of our model as it provides the asymptotics of $J_{N,L}$ in terms of the maximum of the entropy function.

\begin{lemma}\label{lm:asympt}
	$$
	J_{N,L} \sim \exp\left(L\cdot \max_{(M_0,M_1,\dots,M_b) \in R_{N,L}} \frac{N}{L}\cdot H\left(\frac{M_0}{N},\frac{M_1}{N},\dots,\frac{M_b}{N}\right)\right), \text{ as } L \to \infty.
	$$
	where the set $R_{N, L}$ is defined in \eqref{eq:def_of_RNL}, and the function $H$ is defined in \eqref{eq:Shannon_entropy}.
\end{lemma}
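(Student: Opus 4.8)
The starting point is the asymptotic \eqref{eq:24}, which tells us it is enough to analyze the sum
\[
S_{N,L} := \sum_{(M_0,M_1,\dots,M_b)\in R_{N,L}} \binom{N}{M_0,M_1,\dots,M_b}.
\]
Throughout I assume $N$ lies in the range \eqref{eq:bounds_for_N}, so that $R_{N,L}\neq\emptyset$ and the maximum in the statement makes sense. The plan is to compare each multinomial coefficient with the entropy exponential $\exp\!\big(N\,H(M_0/N,\dots,M_b/N)\big)$, to observe that the number of summands is negligible on the exponential scale, and finally to replace the whole sum by its largest term.

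For the first step I would invoke the standard two-sided estimate from the method of types. Writing $\mathbf p = (M_0/N,\dots,M_b/N)$, one has
\[
\frac{1}{(N+1)^{b+1}}\,e^{N\,H(\mathbf p)} \;\le\; \binom{N}{M_0,M_1,\dots,M_b} \;\le\; e^{N\,H(\mathbf p)},
\]
with $H$ the Shannon entropy \eqref{eq:Shannon_entropy}. The upper bound follows at once from the multinomial theorem, by expanding $(p_0+\dots+p_b)^N=1$ and keeping only the single term indexed by $(M_0,\dots,M_b)$; the lower bound comes from Stirling's formula, or from the cruder remark that $(N+1)^{b+1}$ bounds the number of $(b+1)$-part compositions of $N$. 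The convention $0\ln 0=0$ keeps these bounds valid when some $M_a=0$, so no separate case analysis is needed.

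Next I would bound the number of summands. Each coordinate of a point of $R_{N,L}$ lies in $\{0,1,\dots,N\}$, whence $\lvert R_{N,L}\rvert \le (N+1)^{b+1}\le (L+1)^{b+1}$, which is sub-exponential in $L$. Writing $S^\star_{N,L}$ for the largest term of $S_{N,L}$, the trivial sandwich $S^\star_{N,L}\le S_{N,L}\le \lvert R_{N,L}\rvert\,S^\star_{N,L}$ together with the entropy bounds above gives
\[
\frac{1}{(L+1)^{b+1}}\,\exp\!\Big(N\max_{R_{N,L}} H(\mathbf p)\Big) \;\le\; S_{N,L} \;\le\; (L+1)^{b+1}\,\exp\!\Big(N\max_{R_{N,L}} H(\mathbf p)\Big),
\]
where I have used that $N$ is constant over $R_{N,L}$ (it is fixed by the first constraint in \eqref{eq:def_of_RNL}), so that $\max_{R_{N,L}}\exp(N H(\mathbf p))=\exp\!\big(N\max_{R_{N,L}} H(\mathbf p)\big)$.

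Taking logarithms and dividing by $L$, the prefactors $(L+1)^{\pm(b+1)}$ contribute $O\big((\ln L)/L\big)\to 0$, and since $N\le L$ every auxiliary factor is polynomial in $L$. Hence
\[
S_{N,L}\sim \exp\!\Big(N\max_{R_{N,L}} H(\mathbf p)\Big)=\exp\!\Big(L\cdot\max_{R_{N,L}}\tfrac{N}{L}\,H\big(\tfrac{M_0}{N},\dots,\tfrac{M_b}{N}\big)\Big),
\]
which, combined with \eqref{eq:24} and the transitivity of $\sim$, is precisely the claim. This is essentially a method-of-types / largest-term computation, and I do not expect a genuine obstacle; the one point that demands care is the uniform control of the polynomial (in $N\le L$, hence sub-exponential in $L$) prefactors, ensuring they vanish under the $\tfrac1L\ln(\cdot)$ limit that defines $\sim$.
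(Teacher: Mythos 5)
Your proposal is correct and takes essentially the same approach as the paper's proof: the two-sided method-of-types estimate $\binom{N}{M_0,\dots,M_b}\le e^{NH(\mathbf p)}\le (N+1)^{b+1}\binom{N}{M_0,\dots,M_b}$ (the paper cites Lemma 2.2 of \cite{csiszar2004information}, with $\binom{N+b}{b}$ in place of your $(N+1)^{b+1}$, and writes the exponential as $N^N/\prod_a M_a^{M_a}$), combined with the sub-exponential bound $|R_{N,L}|\le (L+1)^{b+1}$ and the largest-term sandwich. The only difference is the order of the two steps---the paper first replaces the sum in \eqref{eq:24} by its largest term and then applies the entropy estimate, whereas you estimate each term first---which is immaterial.
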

\begin{proof}
	Note that
	\begin{align*}
		\max_{(M_0, M_1, \ldots, M_b)\in R_{N, L}} \binom{N}{M_0, M_1, \ldots, M_b}
		& \le \sum_{(M_0, M_1, \dots, M_b)\in R_{N, L}} \binom{N}{M_0, M_1, \ldots, M_b} \\
		& \le |R_{N, L}| \max_{(M_0, M_1, \ldots, M_b)\in R_{N, L}} \binom{N}{M_0, M_1, \ldots, M_b}.
	\end{align*}
	As the number $|R_{N, L}|$ of terms in the sum is at most $(N+1)^{b+1}\le (L+1)^{b+1}$, which is polynomial in $L$, the sum, asymptotically, grows as its largest term. It is, therefore, enough to determine the asymptotics of
	$$J_{N,L} \sim \max_{(M_0,M_1,\ldots,M_b) \in R_{N,L}} \binom{N}{M_0, M_1, \ldots, M_b}, \text{ as } L \text{ (and } N)\to \infty.
	$$
	By following the proof of Lemma 2.2 in \cite{csiszar2004information} we can conclude that
	$$\binom{N+b}{b}^{-1}\frac{N^N}{{M_0}^{M_0} {M_1}^{M_1}\cdots{M_b}^{M_b}} \le \binom{N}{M_0, M_1, \dots, M_b} \le \frac{N^N}{{M_0}^{M_0} {M_1}^{M_1}\cdots{M_b}^{M_b}}.$$
	Note that in case any $M_a$ is zero, the expression $0^0$ is to be interpreted as $1$. Since $\binom{N+b}{b}$ is of polynomial growth, we get
	\begin{equation}\label{eq:Stirling}
	\begin{aligned}
	\binom{N}{M_0, M_1, \dots, M_b} &\sim \frac{N^N}{{M_0}^{M_0} {M_1}^{M_1}\cdots{M_b}^{M_b}} \\
	&= \left(\frac{N}{M_0}\right)^{M_0}\left(\frac{N}{M_1}\right)^{M_1}\cdots\left(\frac{N}{M_b}\right)^{M_b},
	\end{aligned}
	\end{equation}
	as $N\to \infty$.
	Note that $$\left(\frac{N}{M_0}\right)^{M_0}\left(\frac{N}{M_1}\right)^{M_1}\cdots\left(\frac{N}{M_b}\right)^{M_b} = \exp\left(N\cdot H\left(\frac{M_0}{N},\frac{M_1}{N},\dots,\frac{M_b}{N}\right)\right).$$
	Hence
	$$
	J_{N,L} \sim \max_{(M_0,M_1,\dots,M_b) \in R_{N,L}} \exp\left(N\cdot H\left(\frac{M_0}{N},\frac{M_1}{N},\dots,\frac{M_b}{N}\right)\right), \text{ as } L \to \infty,
	$$
	and consequentially
	$$
	J_{N,L} \sim \exp\left(L\cdot \max_{(M_0,M_1,\dots,M_b) \in R_{N,L}} \frac{N}{L}\cdot H\left(\frac{M_0}{N},\frac{M_1}{N},\dots,\frac{M_b}{N}\right)\right), \text{ as } L \to \infty,
	$$
	which is exactly what we wanted to prove.
\end{proof}
\begin{remark}
One could obtain the asymptotics in \eqref{eq:Stirling} from Stirling's approximation $N!\sim (N/e)^N$, as $N\to\infty$, where sub-exponential factors are ignored.
%
\end{remark}

\section{Complexity function of jammed configurations of Rydberg atoms}
\label{sectioncomplexity}
In this section we compute the \emph{complexity function}, sometimes referred to as \emph{configurational entropy}, of jammed configurations of Rydberg atoms. We first recall the definition of complexity function of a certain model.
\begin{definition}
For a fixed density $\rho \in [0,1]$, let $J_{\FLOOR{\rho L},L}$ denote the number of configurations of length $L$ with density $\FLOOR{\rho L}/L\approx\rho$. The complexity function $\FJADEF{f}{[0,1]}{\bbR}$ is then defined as
\begin{equation}\label{eq:cmplxDEF}
f(\rho) = \lim_{L\to \infty} \frac{\ln J_{\floor{\rho L},L}}{L},
\end{equation}
for each $\rho\in[0,1]$ for which this limit exists.
\end{definition}
\begin{remark}
	If the limit above does not exist for a certain $\rho$, one can still define (upper) complexity at that point by replacing $\lim$ in the definition with $\limsup$. And if there are no configurations with a certain density $\rho$, we still write $f(\rho)=0$.
\end{remark}
\begin{remark}
		This definition implies that the number of configurations with density $\FLOOR{\rho L}/L\approx\rho$ grows as $e^{Lf(\rho)}$ for large $L$.
\end{remark}
The guiding idea behind introducing the complexity function is to describe what portion of the total number of configurations take up configurations with a particular density. The problem is that, as $L$ grows to infinity, the actual proportions tend to the delta distribution concentrated on the `most probable' density $\rho_\star$.

As an example, the distribution of densities (the sum of digits divided by the
length) of binary sequences of length $L$ is a symmetric binomial
distribution re-scaled to the interval $[0,1]$. The limiting distribution is
then the delta distribution $\delta_{0.5}$ which is, essentially, the
consequence of the law of large numbers.

This convergence to a delta distribution results from the fact that the number
of configurations with a certain density grows exponentially with a rate that
depends on the density. For large $L$, the number of configurations with
density having the largest rate overtakes, in proportion, configurations having
any other density. The complexity function then quantifies the distribution of
all configurations with respect to their densities in a more refined way.

Another consequence of the fact that the number of configurations having density with the largest rate dominates, in proportion, any other density is that the total number of all configurations grows at the same exponential rate as the number of configurations having this `most probable density'. To be precise, if $\rho_\star$ denotes the density at which the complexity function $f$ attains its maximum and if $J_L$ is the total number of all configurations of length $L$, then $J_L \sim e^{Lf(\rho_\star)}$ for large $L$.

\begin{remark}\label{rem:maxIsWb}
	In Lemma \ref{lm:BGF} we derived the generating function for the sequence
	$J_{N,L}$ within the Rydberg atom model. Plugging $x=1$ into this
	generating function gives the generating function for $J_L$, the total
	number of configurations of length $L$ in Rydberg atom model
	\begin{align*}
	F_b(1, y) &= \frac{(1-y)^2 + y - y^{b+1} - y^{b+2} + y^{2b+2}}{(1-y) (1 -
	y - y^{b+1} + y^{2b+2})} \\
	&= \frac{1+y(1+y+\dots+y^{b})(1+y+\dots+y^{b-1})}{1-y^{b+1}(1+y+\dots+y^{b})}.
	\end{align*}
	From here, we can infer the asymptotics of $J_L$ for large $L$ by inspecting the roots of the polynomial $1-y^{b+1}(1+y+\dots+y^{b})$ in the denominator. More precisely, if $y_b$ is the root with the smallest modulus, then the logarithm of $w_b = |y_b|^{-1}$ gives the exponential growth rate of the sequence $J_L$
	$$J_L\sim w_b^{L} = e^{L \ln w_b}.$$
	The discussion in the previous paragraph now implies the relation $f(\rho_\star^{b\text{-Ryd}}) = \ln w_b.$
\end{remark}

The following theorem is the main result of this paper and provides an elegant expression for the complexity function of jammed configurations of Rydberg atoms $f(\rho)$ in terms of a root of a certain polynomial.
\begin{theorem}\label{tm:cmplx}
	The complexity function of jammed configurations of Rydberg atoms with blockade range $b\in\bbN$ is given as
	$$f(\rho)  = \begin{cases}
	\rho\left[-\ln \frac{1-z}{1-z^{b+1}} - \left(\frac{1}{\rho}-(b+1)\right)\ln z\right], &\text{ if } \frac{1}{2b+1}<\rho \le \frac{1}{b+1},\\
	0, &\text{ otherwise,}
	\end{cases}$$
	where $z\ge 0$ is a real root of the polynomial
	\begin{equation}\label{eq:PolyEq}
		p(z)=\sum_{i=0}^{b} \left(i+b+1-\frac{1}{\rho}\right)z^i
	\end{equation}
	for which the expression $f(\rho)$ is the largest.
\end{theorem}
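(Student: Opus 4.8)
The plan is to turn the combinatorial asymptotics of Lemma~\ref{lm:asympt} into a continuous constrained optimization problem for the Shannon entropy, solve it explicitly by Lagrange multipliers, and read off the closed form. Writing $N=\floor{\rho L}$ (so $N/L\to\rho$), Lemma~\ref{lm:asympt} together with the definition \eqref{eq:cmplxDEF} gives
$$f(\rho)=\lim_{L\to\infty}\frac{N}{L}\max_{(M_0,\dots,M_b)\in R_{N,L}}H\!\left(\frac{M_0}{N},\dots,\frac{M_b}{N}\right).$$
Introducing $p_i=M_i/N$, the two constraints defining $R_{N,L}$ in \eqref{eq:def_of_RNL} become $\sum_{i=0}^b p_i=1$ and $\sum_{i=0}^b i\,p_i=L/N-(b+1)$, whose right-hand side tends to $c:=1/\rho-(b+1)$ as $L\to\infty$. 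The rescaled lattice points are $1/N$-spaced on a compact constraint surface and $H$ is uniformly continuous there, so the discrete maximum converges to the continuous maximum of $H$ over $\Delta_c:=\{p\in[0,1]^{b+1}:\sum_i p_i=1,\ \sum_i i\,p_i=c\}$. I would thus reduce the claim to $f(\rho)=\rho\cdot\max_{p\in\Delta_c}H(p)$.

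Next I would solve this optimization. For $\rho$ in the stated range, $c\in[0,b]$, so $\Delta_c$ is nonempty, convex, and compact; since $H$ is strictly concave it has a unique maximizer, lying in the relative interior when $0<c<b$. Applying Lagrange multipliers to $H(p)-\alpha(\sum_i p_i-1)-\beta(\sum_i i\,p_i-c)$ yields the stationarity condition $-\ln p_i-1-\alpha-\beta i=0$, i.e.\ $p_i=Cz^i$ with $z=e^{-\beta}$. Normalizing by the geometric sum $\sum_{i=0}^b z^i=(1-z^{b+1})/(1-z)$ gives $C=(1-z)/(1-z^{b+1})$, hence the optimizer is the Gibbs-type distribution $p_i=(1-z)z^i/(1-z^{b+1})$. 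The multiplier $z$ is then pinned down by the mean constraint $\sum_i i\,p_i=c$, which after clearing denominators reads $\sum_{i=0}^b(i-c)z^i=0$; substituting $c=1/\rho-(b+1)$, this is exactly $p(z)=0$ from \eqref{eq:PolyEq}.

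To select the root and finish, I would note that the coefficients $i-c$ change sign exactly once (at $i=c$), so by Descartes' rule $p$ has a unique positive real root; equivalently $z\mapsto\sum_i i z^i/\sum_i z^i$ increases strictly from $0$ to $b$, so each $c\in(0,b)$ selects a unique $z>0$, which is the root making $f(\rho)$ largest. Evaluating the entropy at the optimum via $\ln p_i=i\ln z+\ln C$ and using $\sum_i p_i=1$, $\sum_i i\,p_i=c$ collapses the sum:
$$H(p)=-\sum_{i=0}^b p_i\ln p_i=-c\ln z-\ln C=-\ln\frac{1-z}{1-z^{b+1}}-\left(\frac{1}{\rho}-(b+1)\right)\ln z,$$
and multiplying by $\rho$ gives the stated expression. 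For $\rho\notin(1/(2b+1),1/(b+1)]$ there are no jammed configurations by \eqref{eq:bounds_for_rho}, so $f(\rho)=0$; at the endpoints ($c=0$ or $c=b$) the maximizer degenerates to a point mass with $H=0$, consistently with the formula.

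The main obstacle is the rigorous passage from the discrete maximum over the integer lattice $R_{N,L}$ to the continuous maximum over $\Delta_c$, carried out simultaneously with $L\to\infty$: one must control both the drift of the mean-constraint level ($L/N-(b+1)\to c$) and the availability of admissible lattice points, while accounting for the fact that $H$, although uniformly continuous on the simplex, has unbounded gradient near the boundary. A clean route is to sandwich the discrete maximum between $\max_{\Delta_c}H$ (an upper bound, by continuity after a vanishing perturbation of the constraint level) and a lower bound obtained by rounding a near-optimal continuous $p$ to a nearby lattice point, with both errors $o(1)$ after multiplication by the bounded factor $N/L$.
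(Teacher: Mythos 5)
Your proposal is correct and follows essentially the same route as the paper: the same reduction of the discrete maximum from Lemma \ref{lm:asympt} to the continuous entropy-maximization problem via a sandwich/rounding argument, the same Lagrange-multiplier computation producing the geometric optimizer $p_i = Cz^i$ with the mean constraint collapsing to $p(z)=0$ from \eqref{eq:PolyEq}, and the same back-substitution to get the closed form (your direct evaluation $H=-c\ln z-\ln C$ is equivalent to the paper's expression of $f$ through the multipliers $\lambda,\mu$; your Descartes-rule uniqueness remark is a small bonus the paper does not need, since it selects the root maximizing $f$). The only slip is at $\rho=\tfrac{1}{2b+1}$, where jammed configurations do exist (exactly one, when $(2b+1)\mid L$) rather than none, but since $\ln 1=0$ the conclusion $f\bigl(\tfrac{1}{2b+1}\bigr)=0$ stands, exactly as the paper argues.
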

\begin{remark}
	When $\frac{1}{2b+1}<\rho<\frac{1}{b+1}$ the leading coefficient of the polynomial $p(z)$ given in \eqref{eq:PolyEq} is positive, while the constant term is negative. This guaranties the existence of at least one positive real root $z>0$. If $\rho=\frac{1}{b+1}$, then $z=0$ is the root of $p(z)$ and the formula gives $f(\frac{1}{b+1})=0$.
\end{remark}
\begin{remark}
	Since \eqref{eq:PolyEq} is a polynomial of degree $b$, it is possible to find its roots explicitly for $b\le 4$ and numerically for $b>4$. The explicit expression for the complexity in case $b=1$ is
	$$f^{1\text{-Ryd}}(\rho) = \rho \ln \rho -(1-2\rho) \ln (1-2\rho) -(3\rho-1)\ln (3\rho-1),$$
	and for $b=2$
	\begin{multline*}
	f^{2\text{-Ryd}}(\rho) =(3\rho-1)\ln\frac{\sqrt{-44\rho^2+24\rho-3}-4\rho+1}{10\rho-2}-\\
	\rho\ln \frac{-350\rho^3+(25\rho^2-10\rho+1)\sqrt{-44\rho^2+24\rho-3}+215\rho^2-44\rho+3}{\rho^2\sqrt{-44\rho^2+24\rho-3}-134\rho^3+57\rho^2-6\rho}.
	\end{multline*}
	In the case $b = 1$, the function $f^{1\text{-Ryd}}(\rho)$ recovers the result from \cite[formula (7.20)]{30_krapivsky2010kinetic} and \cite[\S VII]{Krapivsky_2013}. The graphs of the complexity function of jammed configurations of Rydberg atoms with blockade range $1\le b\le 10$ are given in Figure \ref{fig:complexiyRydberg}. In that figure we also see that, for each $b$, the maximum of the complexity function matches $\ln w_b$, the growth rate of all jammed configurations. This was already discussed in Remark \ref{rem:maxIsWb}.
\end{remark}
\begin{figure}
	\includegraphics[width=\linewidth]{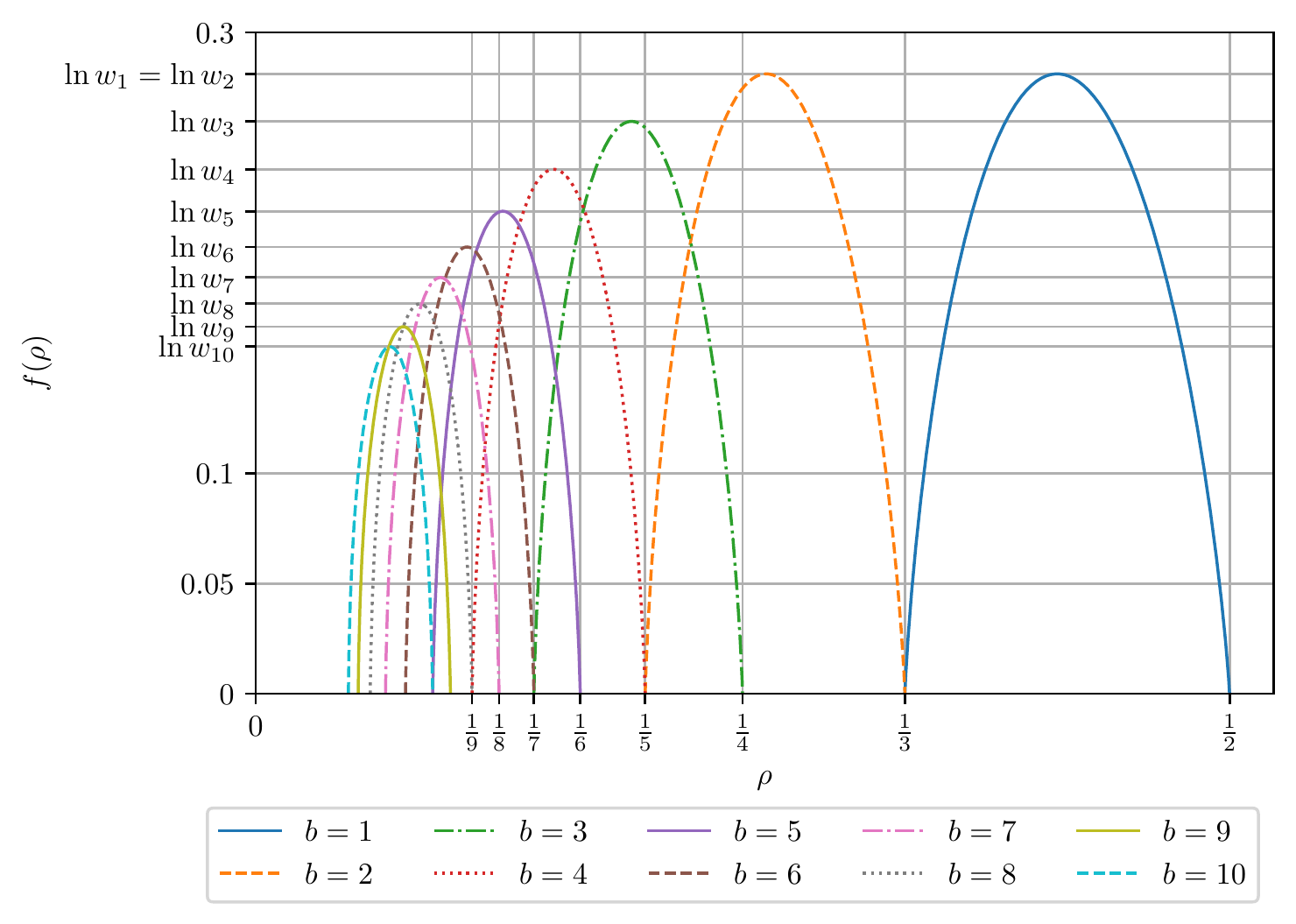}
	\caption{The complexity function of jammed configurations of Rydberg atoms with blockade range $1\le b\le 10$.}\label{fig:complexiyRydberg}
\end{figure}
\begin{proof}[Proof of Theorem \ref{tm:cmplx}]
	Recall that in \eqref{eq:bounds_for_N/L} we showed that
	$$\frac{1}{2b+1}-\frac{1}{L} < \frac{N}{L}\le\frac{1}{b+1},$$
	and therefore, there are no jammed configurations with densities $\rho>\frac{1}{b+1}$ nor with densities $\rho<\frac{1}{2b+1}$, for sufficiently large $L$. Thus, $f(\rho)=0$ when $\rho>\frac{1}{b+1}$ or $\rho<\frac{1}{2b+1}$. In case $\rho = \frac{1}{2b+1}$, it is not hard to see that the number of configurations $J_{\FLOOR{\frac{L}{2b+1}}, L}$ is
	$$J_{\FLOOR{\frac{L}{2b+1}}, L} =\begin{cases}
	1, &\text{if } (2b+1)\mid L,\\
	0, &\text{otherwise}.
	\end{cases}$$
	This implies $f(\frac{1}{2b+1})=0$ by the definition of complexity.

	In the remainder, we fix $\frac{1}{2b+1}<\rho \le \frac{1}{b+1}$. By Lemma \ref{lm:asympt}, and by using the definition of the complexity function \eqref{eq:cmplxDEF}, we have
\begin{equation}\label{eq:cmplx1}
	f(\rho) = \lim_{L\to \infty} \max_{(M_0,M_1,\dots,M_b) \in R_{N,L}} \frac{N}{L}\cdot H\left(\frac{M_0}{N},\frac{M_1}{N},\dots,\frac{M_b}{N}\right),
\end{equation}
where $N=\FLOOR{\rho L}$, provided that this limit exists.
By rewriting the constraint $(M_0,M_1,\dots,M_b) \in R_{N,L}$ as
\begin{gather*}\frac{M_0}{N}\ge 0, \frac{M_1}{N}\ge 0, \dots, \frac{M_b}{N}\ge 0 \\ \frac{M_0}{N}+\frac{M_1}{N}+\dots+\frac{M_b}{N} = 1 \\ \frac{M_1}{N} + 2\frac{M_2}{N}+\dots+b\frac{M_b}{N} = \frac{L}{N} - (b+1)\end{gather*}
and denoting $p_i = \frac{M_i}{N}\in\frac{1}{\FLOOR{\rho L}}\mathbb{Z}$, the complexity \eqref{eq:cmplx1} can be written as
\begin{equation}\label{eq:cmplexity}
	f(\rho) = \lim_{L\to \infty} \max_{(p_0, p_1, \dots, p_b)\in\frac{1}{\FLOOR{\rho L}} R_{\FLOOR{\rho L},L}}  \hat\rho H\left(p_0,p_1,\dots, p_b\right),
\end{equation}
where $\hat\rho = \hat{\rho}(L)=\frac{N}{L}=\frac{\FLOOR{\rho L}}{L}$. We claim that this limit exists and is equal to the maximum of the constrained optimization problem
\begin{equation}\label{eq:optproblem}
\max_{\substack{p_0, p_1, \dots, p_b\ge 0 \\ p_0+p_1+\dots+p_b = 1 \\ p_1 + 2p_2+\dots+bp_b = \frac{1}{\rho} - (b+1)}}  \rho H\left(p_0,p_1,\dots, p_b\right),
\end{equation}
where $p_i\in\bbR$ are no longer required to be fractions.

We argue as follows. Denote by $(p_0^\ast,p_1^\ast,\dots,p_b^\ast)$ the point at which the maximum in \eqref{eq:optproblem} is attained. For each $L\in\bbN$, let $(p_0(L),p_1(L),\dots, p_b(L))$ be the point at which maximum in \eqref{eq:cmplexity} is attained. Clearly,
$$\hat\rho H\left(p_0(L),p_1(L),\dots, p_b(L)\right) \le \hat\rho H(p_0^\ast,p_1^\ast,\dots,p_b^\ast)\le \rho H(p_0^\ast,p_1^\ast,\dots,p_b^\ast).$$
The first inequality follows by substituting $\hat\rho$ for $\rho$ in \eqref{eq:optproblem} and the fact that one is now optimizing over a larger set. The second inequality follows from $\hat\rho\le \rho$. Note that the right hand side no longer depends on $L$, and thus
$$\limsup_{L\to\infty} \hat\rho H\left(p_0(L),p_1(L),\dots, p_b(L)\right) \le \rho H(p_0^\ast,p_1^\ast,\dots,p_b^\ast).$$

Next, for each $L\in\bbN$, we consider the point $(t_0(L),t_1(L),\dots,t_b(L))\in \frac{1}{\FLOOR{\rho L}} R_{\FLOOR{\rho L},L}$, which is closest to the to the optimizer $(p_0^\ast,p_1^\ast,\dots,p_b^\ast)$. Note that, due to the density argument, $(t_0(L),t_1(L),\dots,t_b(L))\to (p_0^\ast,p_1^\ast,\dots,p_b^\ast)$ as $L\to\infty$. This, along with the continuity of $H$ and the fact that $\hat\rho\to\rho$ implies the lower bound
\begin{multline*}
\rho H(p_0^\ast,p_1^\ast,\dots,p_b^\ast) = \lim_{L\to\infty} \hat\rho H(t_0(L),t_1(L),\dots,t_b(L))\le \\
\le\liminf_{L\to\infty} \hat\rho H\left(p_0(L),p_1(L),\dots, p_b(L)\right).
\end{multline*}
Putting everything together completes the argument that the limit
$$f(\rho) = \lim_{L\to\infty} \hat\rho H\left(p_0(L),p_1(L),\dots, p_b(L)\right)$$
exists and that the complexity function is
$$f(\rho) =\rho H(p_0^\ast,p_1^\ast,\dots,p_b^\ast)= \max_{\substack{p_0, p_1, \dots, p_b\ge 0 \\ p_0+p_1+\dots+p_b = 1 \\ p_1 + 2p_2+\dots+bp_b = \frac{1}{\rho} - (b+1)}} \rho \cdot H\left(p_0,p_1,\dots, p_b\right).$$
In order to obtain the expression for complexity $f(\rho)$, it only remains to solve the constrained optimization problem \eqref{eq:optproblem}. We define the Lagrangian function
\begin{multline*}
	\mathcal{L}(p_0, \dots, p_b; \lambda, \mu) = \rho\cdot H(p_0,p_1,\dots,p_b) - \lambda (p_0+p_1+\dots+p_b-1) \\ - \mu (p_1+2p_2\dots+bp_b-\frac{1}{\rho}+(b+1)),
\end{multline*}
and find the stationary point by solving the system
\begin{equation}\label{eq: Lagrange}
\begin{split}
-\rho (\ln p_i +1)-\lambda - \mu i&=0 , \qquad\text{ for } i=0,1,\dots,b;\\
p_0+p_1+\dots+p_b&=1;\\
p_1+2p_2\dots+bp_b&=\frac{1}{\rho}-(b+1).
\end{split}
\end{equation}
By multiplying $i$-th of the first $(b+1)$ equations by $p_i$ and adding them together we get
$$-\rho \sum_{i=0}^b (p_i\ln p_i +p_i)-\lambda\sum_{i=0}^b p_i - \mu\sum_{i=0}^b i p_i=0,$$
and from here we obtain the expression for complexity in terms of the Lagrange multipliers $\lambda$ and $\mu$ which solve the system \eqref{eq: Lagrange}
\begin{equation}\label{eq:complexityLambdaMu}
f(\rho) = \rho H(p_0,p_1,\dots,p_b) = \rho + \lambda + \mu \left(\frac{1}{\rho}-(b+1)\right).
\end{equation}
Subtracting successive equations in \eqref{eq: Lagrange} we get
$$-\rho (\ln p_i - \ln p_{i-1})- \mu =0,$$
or equivalently
$$\frac{p_i}{p_{i-1}} = e^{-\mu/\rho}.$$
Therefore $p_i = p_0 e^{-\mu i/\rho}$, for $i=1,\dots,b$. From the very first equation in \eqref{eq: Lagrange} we get
$$p_0 =e^{-\lambda/\rho-1},$$
and the whole system \eqref{eq: Lagrange} now reduces to just two equations
\begin{align}\label{eq:system1}
e^{-\lambda/\rho-1}\sum_{i=0}^b e^{-\mu i/\rho}&=1;\\\label{eq:system2}
e^{-\lambda/\rho-1}\sum_{i=0}^b ie^{-\mu i/\rho}&=\frac{1}{\rho}-(b+1).
\end{align}
Setting $z=e^{-\mu/\rho}$, and eliminating $e^{-\lambda/\rho -1}$ from equations \eqref{eq:system1} and \eqref{eq:system2}, gives a single polynomial equation of degree $b$
\begin{equation}\label{eq:polyEquation}
bz^b+(b-1)z^{b-1}+\dots+2z^2+z=\left[\frac{1}{\rho}-(b+1)\right](z^b+z^{b-1}+\dots+z+1),
\end{equation}
which can be written as $p(z)=0$ where $p(z)$ is given in \eqref{eq:PolyEq}.

Now, in order to obtain the complexity, all we need is, for a fixed $\frac{1}{2b+1}<\rho<\frac{1}{b+1}$, to find a real root $z>0$ of the polynomial $p(z)$ for which the expression \eqref{eq:complexityLambdaMu} is the largest. The case $\rho=\frac{1}{b+1}$, which gives $z=0$, has to be treated separately. From relation $z=e^{-\mu/\rho}$ and equation \eqref{eq:system1} we have
\begin{equation}\label{eq:zadnja}
\begin{aligned}
	\mu &=-\rho \ln z;\\
	\lambda &=-\rho \left(1+\ln \frac{1-z}{1-z^{b+1}}\right).
\end{aligned}
\end{equation}
Plugging \eqref{eq:zadnja} into \eqref{eq:complexityLambdaMu}, gives the complexity expressed in terms of the root of $p(z)$
$$f(\rho)  = \rho\left[-\ln \frac{1-z}{1-z^{b+1}} - \left(\frac{1}{\rho}-(b+1)\right)\ln z\right].$$

Lastly, in case $\rho=\frac{1}{b+1}$, already from the last two equations in \eqref{eq: Lagrange} we can conclude $p_1=p_2=\dots=p_b=0$ and $p_0=1$. This immediately gives $f(\rho) = 0$ as $H(1,0,0,\dots,0)=0$, completing the proof.
\end{proof}

\begin{remark}
Using the standard summation formulas, we can rewrite \eqref{eq:polyEquation} as
\begin{equation}\label{eq:zPoly}
\frac{b z^{b+2} -(b+1)z^{b+1}+ z}{(1-z)^2}=\left[\frac{1}{\rho}-(b+1)\right]\frac{1-z^{b+1}}{1-z},
\end{equation}
or equivalently
\begin{equation*}
\left[(2b+1)-\frac{1}{\rho}\right] z^{b+2}
-\left[(2b+2)-\frac{1}{\rho}\right]z^{b+1} -\left[b-\frac{1}{\rho}\right]z+\left[(b+1)-\frac{1}{\rho}\right]=0.
\end{equation*}
\end{remark}

As discussed in the introduction, the complexity function is associated to \emph{equilibrium} (or \emph{static}) models of a certain phenomena and $\rho_\star$, the point at which the complexity function attains its maximum, is interpreted as the expected and most probable density observed in such a model. This value $\rho_\star$ is sometimes called the \emph{equilibrium density} of the model and Theorem \ref{tm:rhostar} below shows how to calculate it. A different (and perhaps more natural) way to look at Rydberg atom model is \emph{dynamically}, within the framework of random sequential adsorption (RSA). Initially neutral atoms are sequentially and at random excited (obeying the blockade range constraint) until the jammed configuration is reached. The expected density of the reached jammed configuration (the \emph{jamming limit}) in this dynamical version of the model, denoted by $\rho_{\infty}^{b\text{-Ryd}}$, was computed in \cite[\S IV]{46_krapivsky2020large}
$$
\rho_{\infty}^{b\text{-Ryd}} = \int_0^1 \exp\left[-2\sum_{j=1}^{b} \frac{1-y^j}{j} \right]\,dy.
$$
It is interesting to compare $\rho_{\star}^{b\text{-Ryd}}$ and $\rho_{\infty}^{b\text{-Ryd}}$ for different blockade ranges $b$.
\begin{figure}
	\includegraphics[width=.6\linewidth]{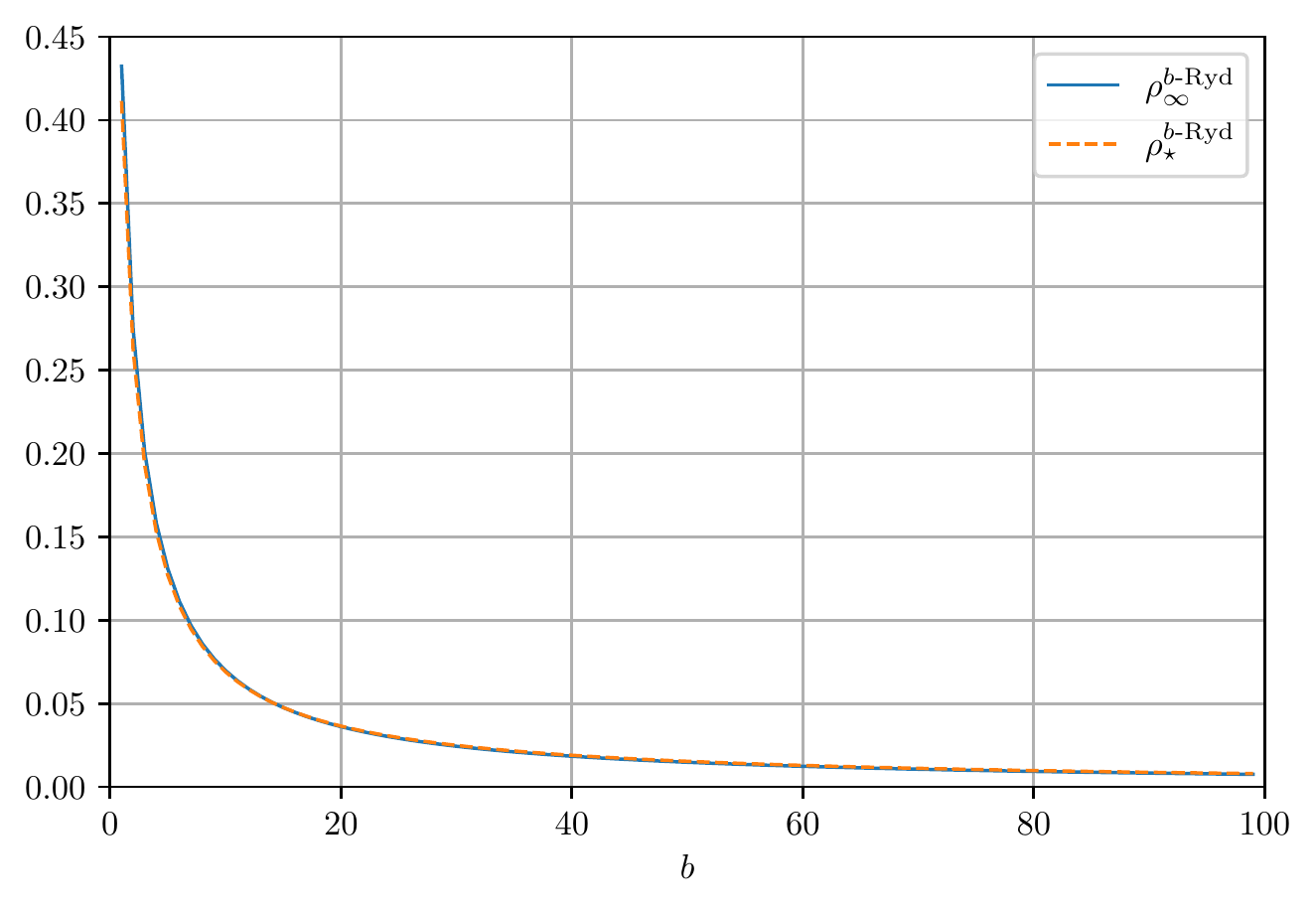}
	\caption{Comparison of $\rho_\star^{b\text{-Ryd}}$ and $\rho_\infty^{b\text{-Ryd}}$ for $1\le b \le 99$.}\label{fig:delta_Rydberg}
\end{figure}
Even though they are not the same, they seem to match quite nicely, see Figure \ref{fig:delta_Rydberg}. Additionally, as one would expect, they both tend to zero for large $b$. One can see their differences more clearly in Figure \ref{fig:delta_Rydberg_separate}.
\begin{figure}
	\begin{subfigure}{.48\linewidth}\centering
		\includegraphics[width=\linewidth]{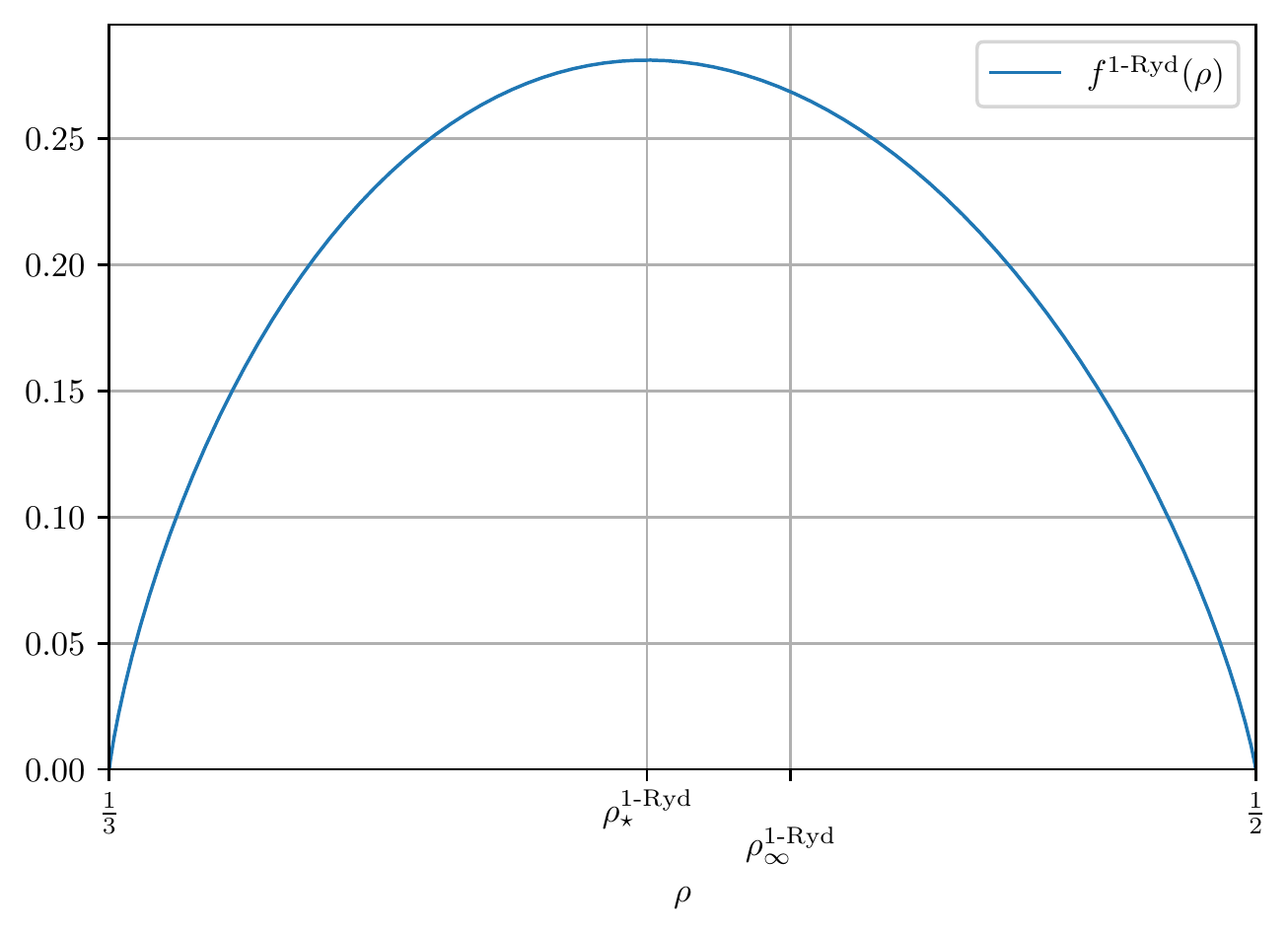}
	\end{subfigure}
	\begin{subfigure}{.48\linewidth}\centering
		\includegraphics[width=\linewidth]{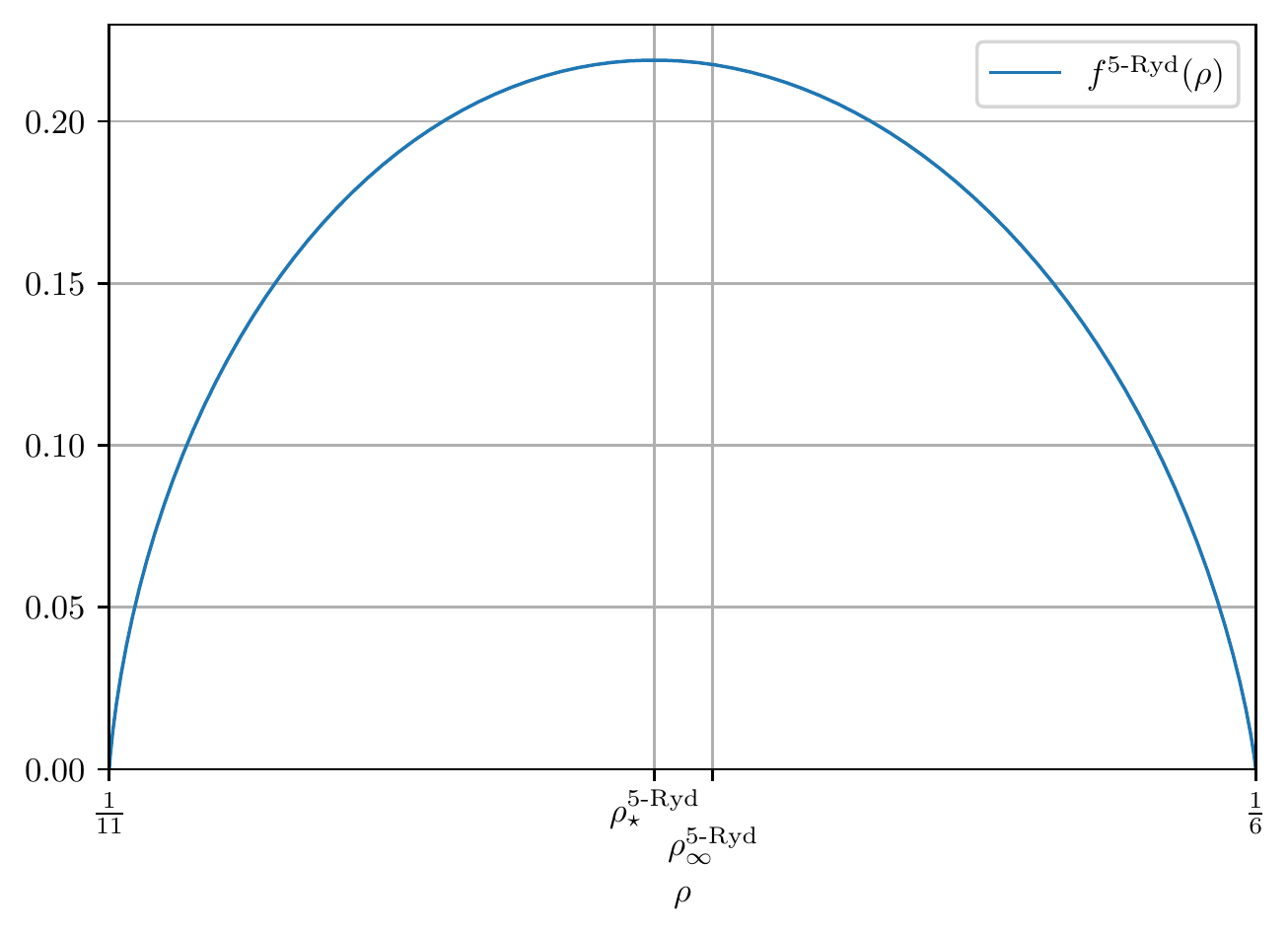}
	\end{subfigure} \\
	\begin{subfigure}{.48\linewidth}\centering
		\includegraphics[width=\linewidth]{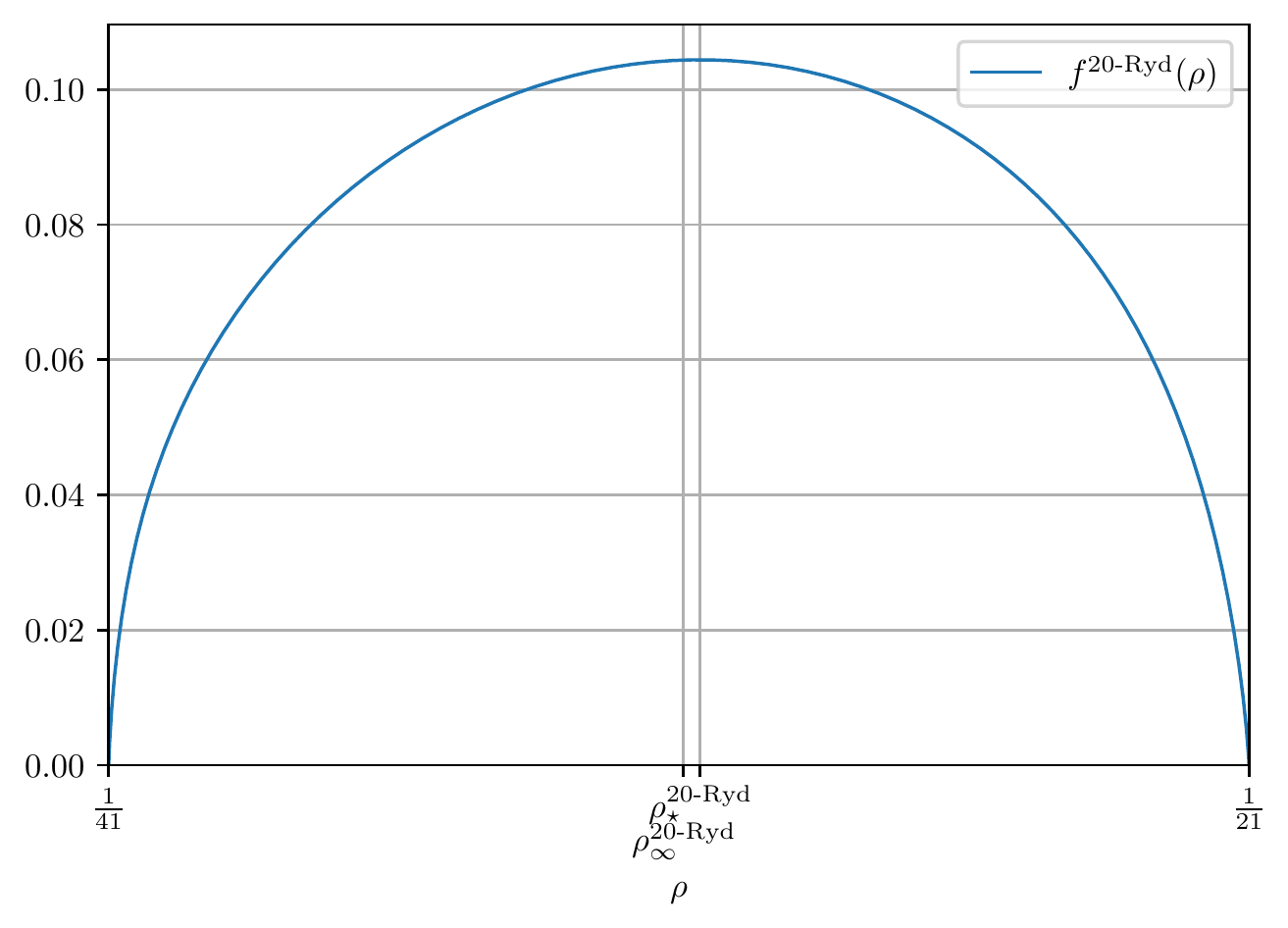}
	\end{subfigure}
	\begin{subfigure}{.48\linewidth}\centering
		\includegraphics[width=\linewidth]{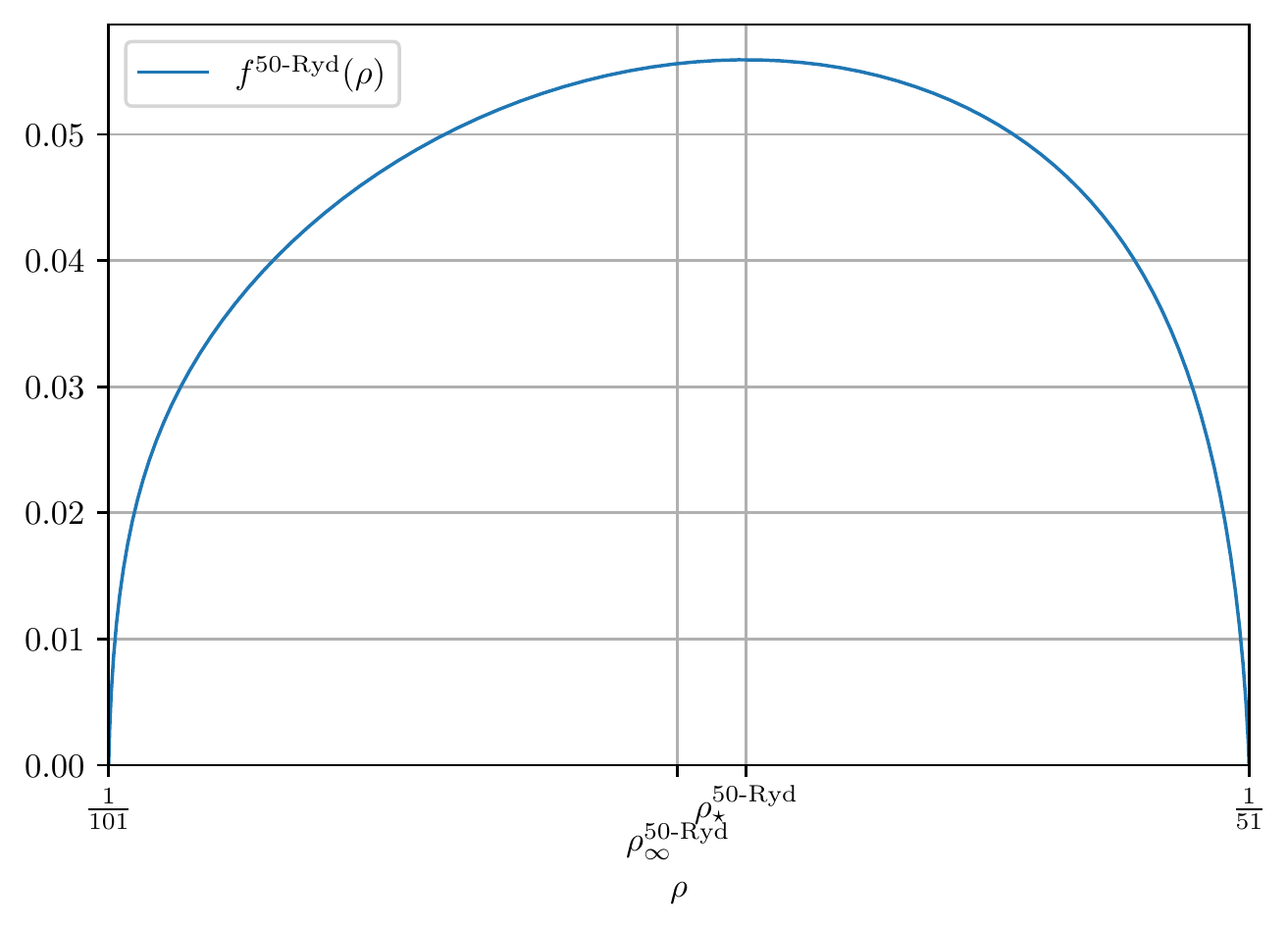}
	\end{subfigure}
	\caption{Complexity function of Rydberg atom model with blockade range $b$, for $b\in\{1,5,20,50\}$. Also plotted in each graph are the equilibrium density $\rho_\star^{b\text{-Ryd}}$ and the jamming density $\rho_\infty^{b\text{-Ryd}}$.}\label{fig:delta_Rydberg_separate}
\end{figure}
This violation of Edwards flatness hypothesis is even more pronounced when one inspects the asymptotics of the two sequences more closely. In Figure \ref{fig:delta_Rydberg_times_b} we see the graph of quantities $b\cdot\rho_{\star}^{b\text{-Ryd}}$ and $b\cdot\rho_{\infty}^{b\text{-Ryd}}$.
\begin{figure}
	\includegraphics[width=.6\linewidth]{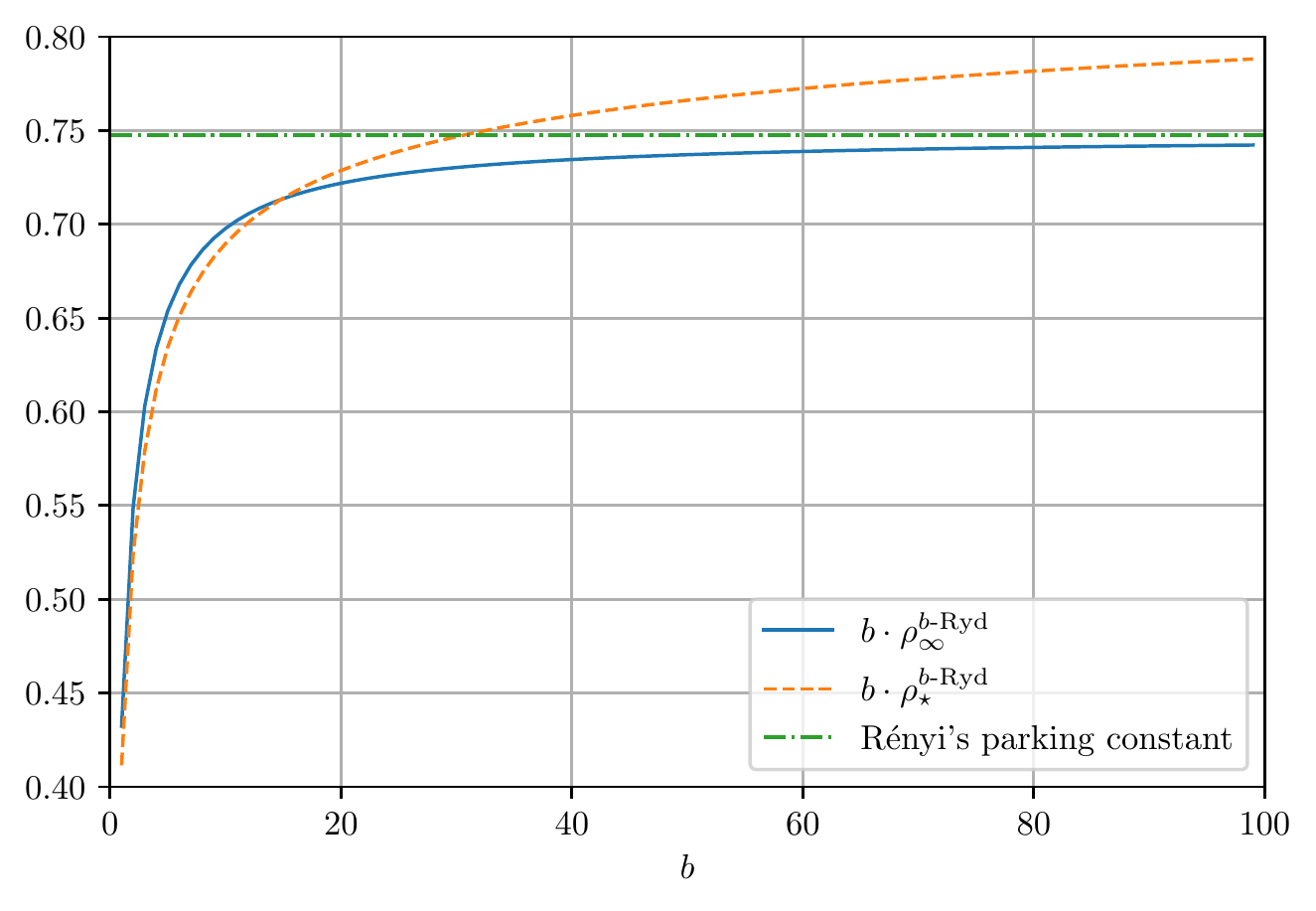}
	\caption{Comparison of $b\cdot\rho_\star^{b\text{-Ryd}}$ and $b\cdot\rho_\infty^{b\text{-Ryd}}$ for $1\le b \le 99$.}\label{fig:delta_Rydberg_times_b}
\end{figure}
It can be shown that these two sequences approach different constants as $b$ grows large
\begin{equation}\label{eq:limits}
\begin{split}
	\lim_{b\to\infty} b\cdot\rho_{\infty}^{b\text{-Ryd}} &= \int_0^\infty \exp\left[-2\int_0^{y} \frac{1-e^{-x}}{x} dx\right]dy=0.7475979202\dots\\
	\lim_{b\to\infty} b\cdot\rho_{\star}^{b\text{-Ryd}} &= 1.
\end{split}
\end{equation}
The constant appearing in the first limit is known as R\'enyi's parking constant \cite{58_renyi1958one}. Both of these two limits are easier to understand in the context of irreversible deposition of $k$-mers. We deal with the $k$-mer deposition model in the following section where we revisit those limits.

The calculation below, showing how to obtain the first limit in \eqref{eq:limits}, and which we provide for completeness, appears in \cite{54_gonzalez1974cooperative}. First note
\begin{multline*}
	\sum_{j=1}^{b} \frac{1-y^j}{j}
	= \sum_{j=1}^{b} \int_y^1 t^{j-1}\,dt
	= \int_y^1 \sum_{j=1}^{b} t^{j-1}\,dt
	= \int_y^1 \frac{1-t^b}{1-t}\,dt \\
	= \begin{bmatrix}x=b(1-t)\\dx=-b\,dt\end{bmatrix}
	= \int_0^{b(1-y)} \frac{1-(1-\frac{x}{b})^b}{x}\,dx,
\end{multline*}
and therefore
\begin{align*}
	b\cdot\rho_{\infty}^{b\text{-Ryd}}
	&= b \int_0^1 \exp\left[-2\sum_{j=1}^{b} \frac{1-y^j}{j} \right]\,dy\\
	&= b \int_0^1 \exp\left[-2 \int_0^{b(1-y)} \frac{1-(1-\frac{x}{b})^b}{x}\,dx \right]\,dy\\
	&= \begin{bmatrix}\tilde{y}=b(1-y)\\d\tilde{y}=-b\,dy\end{bmatrix}
	= \int_0^b \exp\left[-2 \int_0^{\tilde{y}} \frac{1-(1-\frac{x}{b})^b}{x}\,dx \right]\,d\tilde{y}.
\end{align*}
The dominated convergence theorem now implies
$$
	\lim_{b\to\infty}	b\cdot\rho_{\infty}^{b\text{-Ryd}}
	= \int_0^\infty \exp\left[-2\int_0^{y} \frac{1-e^{-x}}{x} dx\right]dy
	= 0.7475979202\dots
$$

Before we calculate the second limit in \eqref{eq:limits}, we give a characterization of the value $\rho_{\star}^{b\text{-Ryd}}$ in terms of a root of a certain polynomial. Compare this with the same results obtained by Došlić \cite[discussion after Theorem
2.10]{Doslic} and Krapivsky--Luck \cite[(3.4), (3.14) and (6.6)]{KL}.
\begin{theorem}\label{tm:rhostar}
	The value $\rho_{\star}^{b\text{-Ryd}}$, at which the complexity of the Rydberg atom model with blockade range $b$, given in Theorem \ref{tm:cmplx}, attains its maximum, can be calculated as
	\begin{equation}\label{eq:rhostar}
	\rho_{\star}^{b\text{-Ryd}} = \frac{(1 - z) (1 - z^{b+1})}{1 + b - b z - 2 z^{b+1} - 2 b z^{b+1} + z^{b+2} + 2 b z^{b+2}},
	\end{equation}
	where $z$ is the unique root of the polynomial
	$$z^{2b+1}+\dots+z^{b+2}+z^{b+1}-1,$$
	on the interval $0<z<1$.
\end{theorem}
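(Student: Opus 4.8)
The plan is to reduce maximizing $f$ to a one–parameter problem, read off the stationarity condition, and match it to the stated polynomial. Since by Theorem \ref{tm:cmplx} we have $f(\tfrac{1}{2b+1})=0=f(\tfrac{1}{b+1})$ while $f(\rho)>0$ on the open interval $(\tfrac{1}{2b+1},\tfrac{1}{b+1})$ (the entropy there being strictly positive), the maximum $\rho_\star^{b\text{-Ryd}}$ lies in the interior, where $f'(\rho)=0$. The proof of Theorem \ref{tm:cmplx} already parametrizes the optimizer by the root $z$ of $p$: writing $S(z)=\sum_{i=0}^{b}z^i$ and $T(z)=\sum_{i=0}^{b}iz^i$, equations \eqref{eq:system1}--\eqref{eq:system2} give $p_0^\ast=1/S(z)$ and
\[
\frac{1}{\rho}=(b+1)+\frac{T(z)}{S(z)} ,
\]
so that $z$ and $\rho$ determine one another. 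Recognising $T(z)/S(z)$ as the mean of the tilted law $p_i\propto z^i$ on $\{0,\dots,b\}$, whose derivative in $\log z$ equals the (positive) variance of that law, I would conclude that $z\mapsto\rho$ is a strictly decreasing bijection onto $(\tfrac{1}{2b+1},\tfrac{1}{b+1})$; this legitimises the parametrization and supplies uniqueness later.

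Next I would extract the stationarity condition via the envelope theorem applied to $f(\rho)=\max_{p}\rho H(p)$ over the $\rho$-dependent feasible set. Differentiating the Lagrangian of \eqref{eq: Lagrange} in its \emph{explicit} $\rho$-dependence alone gives $f'(\rho)=H(p^\ast)-\mu/\rho^2$, with $\mu$ the second multiplier. Imposing $f'(\rho)=0$ and using $f=\rho H(p^\ast)$ yields $f=\mu/\rho$, and then $\mu=-\rho\ln z$ from \eqref{eq:zadnja} collapses this to the strikingly simple criterion
\[
f(\rho_\star^{b\text{-Ryd}})=-\ln z .
\]
As a hedge against the envelope theorem's smoothness hypotheses, the identical relation can instead be produced by differentiating the explicit parametric expression $f(z)=\bigl(S\ln S-T\ln z\bigr)/\bigl((b+1)S+T\bigr)$ together with $\rho(z)$ and setting $df/dz=0$.

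Then I would combine this criterion with the closed form of $f$. Substituting $\tfrac1\rho-(b+1)=T/S$ rewrites the complexity as $f=\rho\ln S(z)-\bigl(1-(b+1)\rho\bigr)\ln z$; equating with $-\ln z$ and cancelling $\rho>0$ leaves $\ln S(z)+(b+1)\ln z=0$, that is
\[
z^{b+1}S(z)=z^{b+1}+z^{b+2}+\dots+z^{2b+1}=1 ,
\]
which is exactly $z^{2b+1}+\dots+z^{b+2}+z^{b+1}-1=0$. Since $z\mapsto z^{b+1}S(z)$ is continuous and strictly increasing from $0$ to $b+1$ on $(0,1)$, it has a unique root there, which pins down $z$.

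Finally I would back-substitute. Using $S(z)=\frac{1-z^{b+1}}{1-z}$ and $T(z)=zS'(z)=\frac{z-(b+1)z^{b+1}+bz^{b+2}}{(1-z)^2}$ in $\rho=S/\bigl((b+1)S+T\bigr)$ and clearing $(1-z)^2$ gives
\[
\rho_\star^{b\text{-Ryd}}=\frac{(1-z)(1-z^{b+1})}{1+b-bz-2z^{b+1}-2bz^{b+1}+z^{b+2}+2bz^{b+2}} ,
\]
the asserted formula. The main obstacle I anticipate is analytic rather than algebraic: justifying that the maximizer and multipliers depend smoothly enough on $\rho$ for the envelope (or parametric) differentiation to be valid, and confirming the monotonicity of $T/S$ that guarantees uniqueness of both $z$ and $\rho_\star^{b\text{-Ryd}}$.
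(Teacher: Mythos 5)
Your proposal is correct, and it derives the key stationarity equation by a route genuinely different from the paper's. The paper forms a second Lagrangian $\mathcal{L}(\rho,z;\lambda)$ in which the polynomial of Theorem \ref{tm:cmplx} is imposed as a constraint coupling $\rho$ and $z$: from the three stationarity equations it expresses $\rho$ through $z$, deduces $\lambda=0$ from the $z$-equation (a step that tacitly needs the remaining sum, which is exactly $p'(z)$, to be nonzero at the root), and then the $\rho$-equation collapses to $\ln\frac{z^{b+1}(1-z^{b+1})}{1-z}=0$, the same condition $z^{b+1}+\dots+z^{2b+1}=1$ that you reach. You instead differentiate the value function of the original entropy problem: the envelope identity $f'(\rho)=H(p^\ast)-\mu/\rho^2$ together with $\mu=-\rho\ln z$ from \eqref{eq:zadnja} gives the clean intermediate criterion $f(\rho_\star^{b\text{-Ryd}})=-\ln z$, which, matched against the closed form of $f$, yields $z^{b+1}S(z)=1$ with $S(z)=\sum_{i=0}^b z^i$. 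Your hedge fully disposes of the analytic worry you flag: writing $T(z)=\sum_{i=0}^b iz^i$ and using $T=zS'$, a short computation gives
$$\frac{df}{dz}=-\rho\left(\frac{T}{S}\right)'\bigl(\rho H+\ln z\bigr),$$
and since $(T/S)'>0$ by your variance argument, this vanishes exactly when $f=\rho H=-\ln z$; as $f$ and $\rho$ are explicit smooth functions of $z$ with $\rho'(z)\neq 0$, no appeal to the envelope theorem's hypotheses is needed, so the parametric route is rigorous on its own. Beyond the different mechanism, your argument makes explicit three points the paper leaves implicit: that the maximizer is interior (so a first-order condition applies at all), that $z\mapsto\rho$ is a strictly decreasing bijection of $(0,\infty)$ onto $(\tfrac{1}{2b+1},\tfrac{1}{b+1})$ --- which simultaneously shows the positive root of the polynomial in Theorem \ref{tm:cmplx} is unique, and that the root of $p$ is simple, the fact the paper's $\lambda=0$ step silently relies on --- and that $z^{b+1}S(z)=1$ has a unique root in $(0,1)$ by monotonicity. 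Your back-substitution reproducing \eqref{eq:rhostar} is the same algebra as the paper's and checks out. What the paper's route buys is self-containedness (pure Lagrange calculus, no envelope theorem); what yours buys is the memorable identity $f(\rho_\star^{b\text{-Ryd}})=-\ln z$ and the rigor points just listed.
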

\begin{proof}
	We seek to find the density $\frac{1}{2b+1}< \rho_{\star}^{b\text{-Ryd}}<\frac{1}{b+1}$ at which the complexity $f=f^{b\text{-Ryd}}$ in Theorem \ref{tm:cmplx} attains its maximum. Again, we employ the Lagrangian function method by setting
	\begin{align*}
	\mathcal{L}(\rho,z; \lambda) &= \rho\left[-\ln \frac{1-z}{1-z^{b+1}} - \left(\frac{1}{\rho}-(b+1)\right)\ln z\right] - \lambda \sum_{i=0}^{b} \left(i+b+1-\frac{1}{\rho}\right)z^i\\
	&= \rho\ln \frac{1-z^{b+1}}{1-z} - \left(1-\rho(b+1)\right)\ln z - \lambda \sum_{i=0}^{b} \left(i+b+1-\frac{1}{\rho}\right)z^i.
	\end{align*}
	The stationary points of this function solve the following system
	\begin{gather*}
		\ln \frac{1-z^{b+1}}{1-z} +(b+1)\ln z -\frac{\lambda}{\rho^2}\cdot\frac{1-z^{b+1}}{1-z} = 0 \\
		 -\frac{\rho (b+1)z^b}{1-z^{b+1}} + \frac{\rho}{1-z}- \frac{\left(1-\rho(b+1)\right)}{z} - \lambda \sum_{i=1}^{b} i\left(i+b+1-\frac{1}{\rho}\right)z^{i-1} = 0\\
		\sum_{i=0}^{b} \left(i+b+1-\frac{1}{\rho}\right)z^i = 0.
	\end{gather*}
	Using standard summation formulas, as in \eqref{eq:zPoly}, it is possible to express $\rho$ from the third equation as
	$$\rho = \frac{(1 - z) (1 - z^{b+1})}{1 + b - b z - 2 z^{b+1} -
	2 b z^{b+1} + z^{b+2} + 2 b z^{b+2}}.$$
	Plugging this into the second equation gives
	$$0=\lambda \sum_{i=1}^{b} i\left(i+b+1-\frac{1}{\rho}\right)z^{i-1}.$$
	From here, we conclude $\lambda=0$.
	Finally, from the first equation we get
	$$\lambda = \rho^2 \frac{1-z}{1-z^{b+1}} \ln\frac{z^{b+1}(1-z^{b+1})}{1-z}$$
	and, combining this with $\lambda=0$, gives
	$$\ln\frac{z^{b+1}(1-z^{b+1})}{1-z} = 0,$$
	or
	$${z^{b+1}(1-z^{b+1})}={1-z}.$$
	We know from Theorem \ref{tm:cmplx} that $z\neq1$, so we can rewrite this equation as
	$$z^{2b+1}+\dots+z^{b+2}+z^{b+1}-1=0.$$
	Clearly, there is a unique $0<z<1$ solving this equation, and the corresponding
	$$\rho_{\star}^{b\text{-Ryd}} = \frac{(1 - z) (1 - z^{b+1})}{1 + b - b z - 2 z^{b+1} - 2 b z^{b+1} + z^{b+2} + 2 b z^{b+2}}$$ is the density at which the complexity in the Rydberg atom model with blockade range $b$ is the largest.
\end{proof}

The previous theorem can be used to give a proof of the second limit in \eqref{eq:limits}.
\begin{corollary}\label{cor:limes_bRydberg}
	$$\lim_{b\to\infty} b\cdot\rho_{\star}^{b\text{-Ryd}}=1.$$
\end{corollary}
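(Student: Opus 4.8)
The plan is to feed the explicit formula from Theorem \ref{tm:rhostar} into a single asymptotic analysis of the defining root. Write $z = z_b\in(0,1)$ for the unique root of $z^{b+1}(1-z^{b+1}) = 1-z$ and introduce the shorthand $w = w_b = z_b^{b+1}$, so that the defining equation becomes $w(1-w) = 1-z$, i.e.\ $1-z = w(1-w)$ and $z = 1 - w + w^2$. Substituting $z^{b+1} = w$ and $z^{b+2} = zw$ into the numerator and denominator of \eqref{eq:rhostar} and repeatedly using $1-z = w(1-w)$, the numerator collapses to $w(1-w)^2$ and the denominator regroups as $(1 - 2w + zw) + bw(2z - 1 - w)$. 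Dividing through by $bw$ gives
$$b\cdot\rho_{\star}^{b\text{-Ryd}} = \frac{(1-w)^2}{\dfrac{1 - 2w + zw}{bw} + (2z - 1 - w)},$$
so the corollary reduces entirely to controlling the three quantities $z_b$, $w_b$, and $bw_b$ as $b\to\infty$.

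The second step is to establish three asymptotic facts: (i) $z_b\to 1$; (ii) $w_b\to 0$; and (iii) $bw_b\to\infty$. For (i), any limit point $z^\ast\in[0,1]$ of a subsequence must satisfy the following: if $z^\ast < 1$ then $w_b = z_b^{b+1}\to 0$, forcing $w(1-w)\to 0$ while $1-z\to 1-z^\ast > 0$, contradicting the defining equation; hence $z^\ast = 1$ and $z_b\to 1$. Given (i), the relation $w(1-w) = 1-z\to 0$ leaves only $w_b\to 0$ or $w_b\to 1$; using $1 - z_b^{b+1}\approx (b+1)(1-z_b)$ together with $w(1-w)=1-z$ forces $w_b(b+1)\approx 1$ in the hypothetical case $w_b\to 1$, which is absurd, so (ii) holds. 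For (iii) I argue by contradiction: if $bw_b\le C$ along a subsequence, then $z_b = 1 - w_b(1-w_b)\ge 1 - C/b$, whence $w_b = z_b^{b+1}\ge (1 - C/b)^{b+1}\to e^{-C} > 0$, contradicting (ii).

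Finally, I substitute these limits into the displayed expression for $b\cdot\rho_{\star}^{b\text{-Ryd}}$. With $w_b\to 0$ and $z_b\to 1$ the numerator $(1-w)^2\to 1$; in the denominator the fraction $\frac{1-2w+zw}{bw}\to 0$ because its numerator tends to $1$ while $bw\to\infty$ by (iii), and $2z - 1 - w\to 1$. Therefore $b\cdot\rho_{\star}^{b\text{-Ryd}}\to 1/(0+1) = 1$, as claimed.

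I expect the main obstacle to be part (iii), namely proving that $bw_b$ genuinely diverges rather than staying bounded; this is where the tension between the algebraic root and the exponential scale $z^{b+1}\sim e^{-(b+1)(1-z)}$ is felt. The contradiction argument above sidesteps computing the exact rate, although one can check heuristically via a Lambert-$W$ analysis of $w = (1-w+w^2)^{b+1}$ that in fact $(b+1)w_b\sim\ln b$. By comparison, the limits (i) and (ii) and the algebraic simplification of \eqref{eq:rhostar} are routine.
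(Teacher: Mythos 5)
Your proof is correct, and its overall skeleton is the same as the paper's: start from Theorem \ref{tm:rhostar}, derive asymptotics for the root $z=z_b$ of $z^{b+1}(1-z^{b+1})=1-z$, and pass to the limit in \eqref{eq:rhostar}. The execution differs at the one genuinely delicate point, and your alternative there is worth recording. Your algebraic reduction checks out: with $w=z^{b+1}$ and $z=1-w+w^2$, the numerator of $b\cdot\rho_{\star}^{b\text{-Ryd}}$ becomes $bw(1-w)^2$ and the denominator regroups as $(1-2w+zw)+b(1-z)(1-2w)=(1-2w+zw)+bw(2z-1-w)$, so your displayed formula is right. Both proofs then need $z\to1$, $w\to0$, and a divergence statement; since $1-z=w(1-w)\sim w$, your $bw\to\infty$ is equivalent to the paper's $b(1-z)\to\infty$. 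The paper proves this divergence by taking logarithms of the defining equation, $(b+1)\ln z=\ln(1-z)-\ln(1-z^{b+1})$, and evaluating the resulting indeterminate limit; you prove it by contradiction: $bw\le C$ along a subsequence forces $z\ge1-C/b$, hence $w=z^{b+1}\ge(1-C/b)^{b+1}\to e^{-C}>0$, contradicting $w\to0$. That argument is more elementary (no logarithms, no indeterminate form) and arguably cleaner. One spot needs tightening: your step (ii), ruling out $w_b\to1$, leans on an unproved ``$\approx$''. Make it rigorous with the sandwich $(b+1)z^{b+1}(1-z)\le 1-z^{b+1}\le(b+1)(1-z)$; in fact this sandwich combined with $w(1-w)=1-z$ yields $1\ge(b+1)w^2$ outright, i.e.\ $w\le(b+1)^{-1/2}$, which gives (ii) and then (i) in one line, with no subsequence arguments (the paper reaches $z^{b+1}\to0$ equally cheaply from $(b+1)z^{2b+1}\le1$). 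With that repair your proof is complete.
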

\begin{proof}
	Since $0<z=z(b)<1$ solves the equation
	\begin{equation}\label{eq:zb1}
		\frac{z^{b+1}(1-z^{b+1})}{1-z} = z^{2b+1}+\dots+z^{b+2}+z^{b+1} =1
	\end{equation}
	it follows
	$$bz^{2b+1}<1<bz^{b+1}$$ and therefore
	$$\lim_{b\to\infty} z^{2b+1} = 0.$$
	Multiplying by $z$ and taking square root we also get
	$$\lim_{b\to\infty} z^{b+1} = 0.$$
	Finally, letting $b\to\infty$ in the identity $z^{b+1}(1-z^{b+1})=1-z$, gives
	$$\lim_{b\to\infty} z = 1.$$
	Note that
	$$b\cdot\rho_{\star}^{b\text{-Ryd}} = \frac{b(1 - z) (1 - z^{b+1})}{1 + b(1-z)[1-2z^{b+1}] - 2 z^{b+1} + z^{b+2} }$$
	so in order to get $\lim_{b\to\infty}b\cdot\rho_{\star}^{b\text{-Ryd}}=1$, it suffices to show $\lim_{b\to\infty} b(1-z)=\infty$. To see this, note that from \eqref{eq:zb1} it follows
	$$(b+1)\ln z  = \ln (1-z) - \ln (1-z^{b+1})$$
	and hence
	$$\lim_{b\to\infty} (b+1)(1-z) =  \lim_{b\to\infty} \frac{1-z}{\ln z} \cdot\left[\ln (1-z) - \ln (1-z^{b+1})\right] = -1\cdot\left[-\infty -0\right] = +\infty$$
	which completes the argument.
\end{proof}

\section{Complexity function of jammed configurations for irreversible deposition of \texorpdfstring{$k$}{k}-mers}
\label{sectioncomplexitykmers}
It is easy to see that the Rydberg atom model with blockade range $b$ is, up to scaling all densities by a factor $b+1$, equivalent to the irreversible deposition of $k$-mers model where $k=b+1$. As an immediate consequence of Theorem \ref{tm:cmplx} we get the complexity of jammed configurations for irreversible deposition of $k$-mers.
\begin{corollary}\label{cor:cmplx}
	For $k\in\bbN$, $k>1$, the complexity function of jammed configurations for irreversible deposition of $k$-mers is
	$$f(\rho)  = \begin{cases}
	\frac{\rho}{k}\left[-\ln \frac{1-z}{1-z^{k}} - \left(\frac{k}{\rho}-k\right)\ln z\right], &\text{ if } \frac{k}{2k-1}<\rho \le 1,\\
	0, &\text{ otherwise,}
	\end{cases}$$
	where $z\ge 0$ is a real root of the polynomial
	\begin{equation*}
	\sum_{i=0}^{k-1} \left(i+k-\frac{k}{\rho}\right)z^i
	\end{equation*}
	for which the expression $f(\rho)$ is the largest.
\end{corollary}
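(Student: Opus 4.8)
The plan is to deduce the statement directly from Theorem \ref{tm:cmplx} by invoking the explicit equivalence of the two models described in the introduction, in which each excited Rydberg atom with blockade range $b$ is replaced by a $k$-mer, $k=b+1$, occupying $k$ consecutive sites starting from the atom's position (see Figure \ref{fig:RydkmerEq}). The first step is to record how the two notions of density correspond. If a jammed Rydberg configuration of length $L$ has $N$ excited atoms, its Rydberg density is $N/L$; the associated $k$-mer configuration has the same length $L$, but its $N$ copies of the $k$-mer cover $kN$ sites, so its coverage density is $\rho=kN/L$. Hence the $k$-mer density $\rho$ and the Rydberg density $r:=N/L$ are related by $\rho=k\cdot r$, equivalently $r=\rho/k$; this is precisely the scaling of densities by the factor $b+1=k$ alluded to in the statement.

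Next I would argue that the complexity functions transfer. The map between jammed configurations in the two models is a bijection up to a bounded, hence sub-exponential, multiplicative factor (arising from Rydberg configurations having an excited atom at the rightmost site), and it preserves the length $L$ while sending configurations with $N$ excited atoms to configurations covering $kN$ sites. Consequently the two counting sequences agree up to a sub-exponential factor, and since the complexity function in \eqref{eq:cmplxDEF} is insensitive to such factors, the $k$-mer complexity evaluated at coverage density $\rho$ equals the Rydberg complexity with blockade range $b=k-1$ evaluated at the corresponding density $r=\rho/k$.

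The final step is a routine substitution into the formulas of Theorem \ref{tm:cmplx}. Setting $b=k-1$ (so that $b+1=k$) and replacing the Rydberg density by $\rho/k$ (so that its reciprocal becomes $k/\rho$), the admissible range $\frac{1}{2b+1}<r\le\frac{1}{b+1}$ turns into $\frac{k}{2k-1}<\rho\le 1$, the prefactor $r$ becomes $\frac{\rho}{k}$, the quantity $\frac{1}{r}-(b+1)$ becomes $\frac{k}{\rho}-k$, and the polynomial $\sum_{i=0}^{b}\left(i+b+1-\frac{1}{r}\right)z^i$ becomes $\sum_{i=0}^{k-1}\left(i+k-\frac{k}{\rho}\right)z^i$. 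The prescription selecting the positive real root $z$ for which the expression $f(\rho)$ is the largest carries over verbatim.

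The only point demanding care is the bookkeeping of the density scaling, that is, keeping straight which quantity plays the role of \emph{density} in each model; once the relation $\rho=k\cdot(N/L)$ is fixed and the sub-exponential nature of the constant discrepancy between the two counting sequences is noted, the remainder is a mechanical change of variables.
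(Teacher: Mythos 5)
Your proposal is correct and follows essentially the same route as the paper: the paper derives the corollary as an immediate consequence of Theorem \ref{tm:cmplx} via the model equivalence, with all densities scaled by the factor $b+1=k$, exactly the substitution $b=k-1$, $r=\rho/k$ that you carry out. Your write-up merely makes explicit the bookkeeping (length preserved, coverage density $\rho=kN/L$, boundary-truncation discrepancy being a bounded and hence sub-exponential factor) that the paper leaves to the reader.
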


\begin{figure}
	\includegraphics[width=\linewidth]{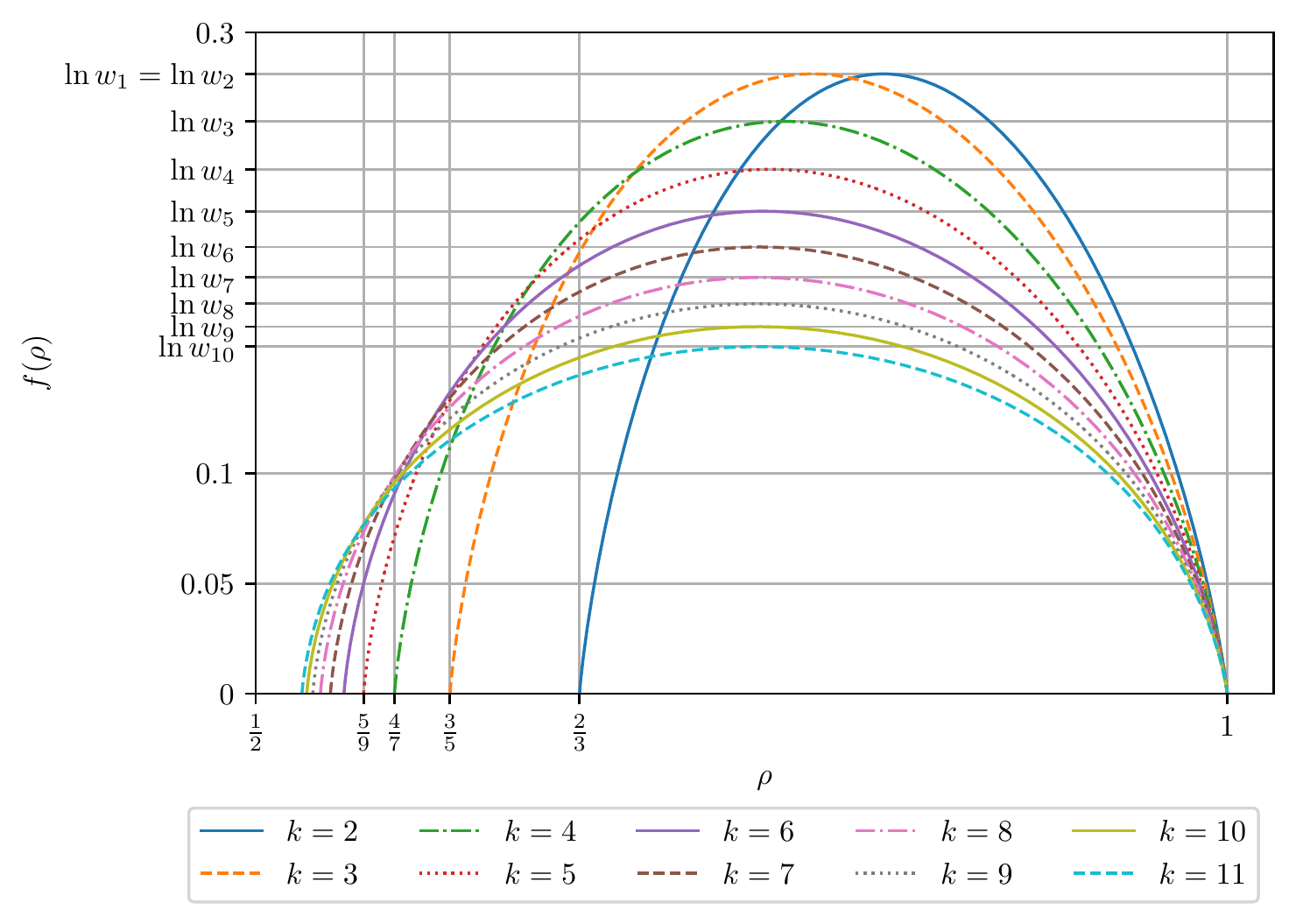}
	\caption{The complexity function of jammed configurations for irreversible deposition of $k$-mers, for $2\le k\le 11$.}\label{fig:kmer_all}
\end{figure}

Figure \ref{fig:kmer_all} shows the complexity function for all $2 \le k\le 11$.
Note that the support of the complexity function is now contained in the
interval $[1/2,1]$. In Figure \ref{fig:delta_all} we compare the equilibrium density $\rho_\star^{k\text{-mer}}$ and the jamming density $\rho_\infty^{k\text{-mer}}$, for $2\le k \le 100$. In this model it is even more obvious that the Edwards hypothesis is violated. The limits of these two sequences as $k$ grows large are
\begin{equation}\label{eq:k-mer-limits}
	\begin{aligned}
\lim_{k\to\infty} \rho_{\infty}^{k\text{-mer}} &= \int_0^\infty \exp\left[-2\int_0^{y} \frac{1-e^{-x}}{x} dx\right]dy=0.7475979202\dots \\
\lim_{k\to\infty} \rho_{\star}^{k\text{-mer}} &= 1.
\end{aligned}
\end{equation}
Note that these limits are equivalent to those in \eqref{eq:limits}. The convergence of jamming limits of $k$-mer deposition models (as $k$ grows to infinity) to the R\'{e}nyi's parking constant is discussed in \cite[\S 7.1]{30_krapivsky2010kinetic}.

Clearly, the second limit from \eqref{eq:k-mer-limits} follows from Corollary \ref{cor:limes_bRydberg} as $\rho_{\star}^{k\text{-mer}} = k\cdot\rho_{\star}^{b\text{-Ryd}}$ for $b=k-1$. Below, we provide a direct alternative proof of this fact.
\begin{theorem}
\begin{align*}
\lim_{k\to\infty} \rho_{\star}^{k\text{-mer}} &= 1.
\end{align*}
\end{theorem}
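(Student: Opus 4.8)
The plan is to work directly with the $k$-mer quantities rather than deducing the result from Corollary \ref{cor:limes_bRydberg}. First I would record the defining data for $\rho_\star^{k\text{-mer}}$. By the scaling $\rho_\star^{k\text{-mer}}=k\,\rho_\star^{b\text{-Ryd}}$ with $b=k-1$ and Theorem \ref{tm:rhostar} (or by running the Lagrange computation of Corollary \ref{cor:cmplx} directly), the maximizer is governed by the unique root $z=z(k)\in(0,1)$ of
$$z^{k}\bigl(1-z^{k}\bigr)=1-z,\qquad\text{equivalently}\qquad \sum_{i=k}^{2k-1}z^{i}=1,$$
and substituting $b=k-1$ into \eqref{eq:rhostar} yields
$$\rho_\star^{k\text{-mer}}=\frac{k(1-z)(1-z^{k})}{k-(k-1)z-2k\,z^{k}+(2k-1)z^{k+1}}.$$
The whole proof reduces to understanding the asymptotics of $z(k)$ and then simplifying this ratio.

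Next I would establish the coarse asymptotics of $z(k)$, exactly in the spirit of Corollary \ref{cor:limes_bRydberg}. Since $\sum_{i=k}^{2k-1}z^{i}=1$ is a sum of $k$ terms each lying between $z^{2k-1}$ and $z^{k}$, one gets $k\,z^{2k-1}<1<k\,z^{k}$. From the left inequality $z^{2k}<z/k<1/k$, so $z^{k}<1/\sqrt{k}\to 0$; and then letting $k\to\infty$ in $z^{k}(1-z^{k})=1-z$ forces $z\to 1$. The right inequality gives only $k\,z^{k}>1$, which is the crux of the difficulty: it bounds $k z^k$ away from $0$ but does not show it diverges.

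The main obstacle is therefore upgrading $k z^k>1$ to $k\,z^{k}\to\infty$, which is what I expect to need for the final limit. Here I would combine the defining equation with the elementary inequality $-\ln z\le (1-z)/z$, valid for $0<z<1$. Writing $u=z^{k}$, so that $-\ln z=\tfrac1k\ln(1/u)$ and $1-z=u(1-u)$, the inequality becomes $\tfrac1k\ln(1/u)\le u(1-u)/z$, whence
$$k\,z^{k}=k\,u\ \ge\ z\,\ln\!\bigl(1/u\bigr).$$
Since $z\to 1$ and $u=z^{k}\to 0$ give $\ln(1/u)\to\infty$, this yields $k\,z^{k}\to\infty$ and in particular $z/(k z^{k})\to 0$.

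Finally I would simplify the formula for $\rho_\star^{k\text{-mer}}$ using the relation $1-z=z^{k}(1-z^{k})$. The numerator becomes $k\,z^{k}(1-z^{k})^{2}$, and collecting the denominator as $k(1-z)(1-2z^{k})+z(1-z^{k})=(1-z^{k})\bigl[k z^{k}(1-2z^{k})+z\bigr]$ and cancelling the common factor $1-z^{k}$ gives, after dividing through by $k z^{k}$,
$$\rho_\star^{k\text{-mer}}=\frac{1-z^{k}}{\,1-2z^{k}+\dfrac{z}{k\,z^{k}}\,}.$$
Passing to the limit with $z^{k}\to 0$ and $k z^{k}\to\infty$ (so $z/(k z^{k})\to 0$) sends the right-hand side to $1$, completing the proof. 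The only nonroutine ingredient is the logarithmic comparison in the previous paragraph; the rest is bookkeeping.
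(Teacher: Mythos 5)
Your proof is correct, but it takes a genuinely different route from the paper's proof of this theorem. The paper avoids the root $z(k)$ altogether: it sets $x=1$ in \eqref{eq:BGF_k-mers}, identifies $\rho_{\star}^{k\text{-mer}}$ with the efficiency $k/\bigl(w_k q_k'(w_k)\bigr)$ from \cite{Doslic}, where $w_k$ is the smallest positive solution of $q_k(y)=y^k\tfrac{1-y^k}{1-y}=1$, and then shows that for any fixed $a>0$ one has $w_k<1-\tfrac{a}{k}$ for all large $k$; this bounds $w_k q_k'(w_k)$ above by $\tfrac{a+1}{a}k$ and forces the efficiency arbitrarily close to $1$. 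Your argument instead runs through Theorem \ref{tm:rhostar} and the scaling $\rho_{\star}^{k\text{-mer}}=k\,\rho_{\star}^{(k-1)\text{-Ryd}}$, which is precisely the alternative route the paper acknowledges (and carries out in Rydberg variables as Corollary \ref{cor:limes_bRydberg}) before presenting its own ``direct alternative proof''. Indeed your steps closely parallel that corollary: the bounds $kz^{2k-1}<1<kz^{k}$, the conclusions $z^{k}\to 0$ and $z\to 1$, and then the crucial divergence step. Where the corollary proves $b(1-z)\to\infty$ via the identity $(b+1)\ln z=\ln(1-z)-\ln(1-z^{b+1})$ together with $\tfrac{1-z}{\ln z}\to-1$, you prove the equivalent statement $kz^{k}\to\infty$ via the elementary inequality $-\ln z\le(1-z)/z$; both are sound, and yours is arguably the more elementary. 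Your closing reduction to $\rho_{\star}^{k\text{-mer}}=\bigl(1-z^{k}\bigr)/\bigl(1-2z^{k}+\tfrac{z}{kz^{k}}\bigr)$ is also correct (the factorization of the denominator as $(1-z^{k})\bigl[kz^{k}(1-2z^{k})+z\bigr]$ checks out) and is a nice bonus, since it makes the mechanism of convergence explicit in a way neither argument in the paper does. What the paper's generating-function proof buys in exchange is independence from Theorem \ref{tm:rhostar}: it needs only the denominator of $F_k(1,y)$ and the efficiency formula of \cite{Doslic}, and it produces a quantitative estimate, $w_k<1-a/k$, on the singularity itself.
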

\begin{proof}
The quantity we are interested in, $\rho_{\star}^{k\text{-mer}}$, is
equivalent to the quantity called the {\em efficiency} $\varepsilon (k)$ in
the context of packing $P_k$ into $P_n$. It was shown in \cite{Doslic} that
the efficiency is determined by the smallest singularity $w_k$ of the
generating function $F_k(1,y)$, i.e., by the smallest zero of its denominator.
Hence we start by setting $x=1$ into the rightmost expression in
\eqref{eq:BGF_k-mers},
$$F_k(1,y) = \frac{1-y^k}{1-y-y^k-y^{2k}}=\frac{\frac{1-y^k}{1-y}}{1-y^k \frac{1-y^k}{1-y}}.$$
We rewrite its denominator as $1-q_k(y)$, where $q_k(y) =q^k \frac{1-y^k}{1-y}$,
and denote the smallest solution of equation $q_k(y) = 1$ by $w_k$.
This equation has only one positive solution, since $q_k(0) = 0$,
$q_k(1) = k > 1$ for large $k$ and $q_k'(y) > 0$ for all $y > 0$. Moreover,
the same reasoning provides a better lower bound for $w_k$, since
$q_k (\frac{1}{2}) = 2^{(1-k)}(1-2^{-k}) < 1$. Hence $1/2 < w_k < 1$.

Consider now the expression
$$\varepsilon (k) = \rho_{\star}^{k\text{-mer}} = \frac{k}{w_k q_k ' (x)}$$
derived in \cite{Doslic}. First we rewrite $q_k ' (w_k)$ as
$$q_k'(x) = x^k \frac{1-x^k}{1-x} \left [ \frac{2k}{x} - \frac{k}{x(1-x^k)}
+ \frac{1}{1-x} \right ] .$$
After plugging in $x = w_k$, the term outside the brackets becomes equal to one,
and by multiplying through by $w_k$ we arrive at
$$w_k q_k ' (w_k) = \left ( 2 - \frac{1}{1-w_k^k} \right ) k + \frac{w_k}{1-w_k}.$$
We are seeking upper bounds to the right-hand side. The first term is bounded
from above by $k$, since the expression in parentheses cannot exceed one.
It remains to bound the second term. As mentioned before, $w_k$ is the only
positive solution of the equation $1-q_k(x) = 0$. We claim that, for a given
(large) positive $a$, $w_k < 1-\frac{a}{k}$ for large enough $k$. So let us
suppose otherwise, that for a given $a > 0$, $w_k > 1-\frac{a}{k}$ is valid
for all $k$. It means that the function $1-q_k(x)$ has a positive value for
$x = 1-\frac{a}{k}$. By evaluating both sides, we obtain that
$$\left ( 1-\frac{a}{k} \right ) ^k - \left ( 1-\frac{a}{k} \right ) ^{2k} < \frac{a}{k}$$
is valid for all $k$.
This is a contradiction, since the left-hand side has a positive limit,
$e^{-a} - e^{-2a} > 0$, while the right-hand side tends to zero as $k$
tends to infinity. Hence,
$w_k < 1-\frac{a}{k}$ for large enough $k$. Now the second term can be
bounded from above by $\frac{a}{k}$, and the whole expression for
$w_k q_k'(w_k)$ is bounded from above by $\frac{a+1}{a} k$. Since $a$ can be
taken arbitrarily large, it means that the reciprocal value of $w_k q_k'(w_k)$,
which is equal to our $\rho_{\star}^{k\text{-mer}}$, comes arbitrarily close
to one, and our claim follows.
\end{proof}
The convergence is quite slow, most likely logarithmic. We note another unusual thing in Figure \ref{fig:delta_all}. The equilibrium
density $\rho_\star^{k\text{-mer}}$ attains the minimum value for $k=9$. The
interpretation being that the polymers of length $9$ pack the least efficiently
of all polymers assuming the equilibrium model. This phenomenon was previously
observed in \cite{Doslic}.

\begin{figure}
	\includegraphics[width=.6\linewidth]{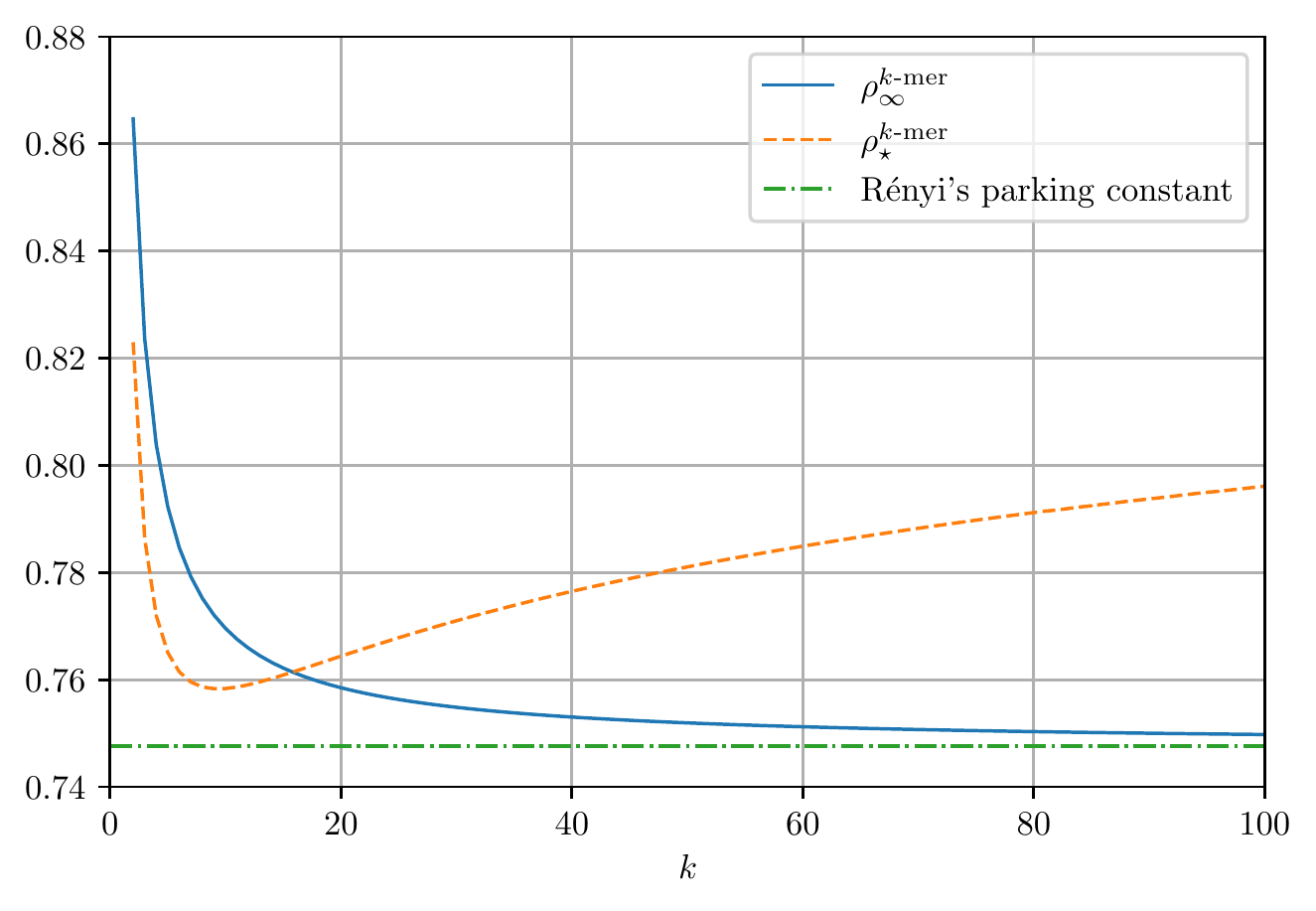}
	\caption{Comparison of $\rho_\star^{k\text{-mer}}$ and $\rho_\infty^{k\text{-mer}}$ for $2\le k \le 100$.}\label{fig:delta_all}
\end{figure}

\section{Conclusions}\label{sec:concluding}
In this paper we have computed the complexity function (or configurational
entropy) of jammed configurations of Rydberg atoms with a given blockade
range on a one-dimensional lattice. We employed a purely combinatorial method
which allowed us to compute the complexity function by solving a constrained
optimization problem. Along the way we have explored and elucidated numerous
connections between the considered problem and other models, such as, e.g.,
the random sequential adsorption and packings of blocks of a given length
into one-dimensional lattices. In most cases, we have not followed those links
very far. We believe that many interesting results could be obtained by
deeper investigations of those connections. As an example, we mention here
that explicit expressions for the number of maximal packings of given size
from reference \cite{Doslic} could be directly translated into expressions
for the number of jammed configurations of Rydberg atoms. By the same reasoning
one can show that the total number of all jammed configurations of $N$
Rydberg atoms with blockade range $b$ on all one-dimensional lattices
is given by $(b+1)^{N+1}$.

The methods employed here could be easily adapted for other one-dimensional
structures with low connectivity such as, e.g., cactus chains. Another class
of promising structures could be various simple graphs decorated by addition
of certain number of vertices of degree one to each of their vertices.

Similar problems were considered under various guises also for finite portions
of rectangular lattices, mostly for narrow strips of varying length. Among the
best known problems of this type are the so-called unfriendly seating
arrangements. See \cite{chern,georgiu} for their history and some recent
developments. To the same class belong the problems concerned with privacy,
such as the ones considered in \cite{kranakis}. All cited references were
concerned with one-dimensional lattices and/or narrow strips in the square
grid, mostly with ladders. It would be interesting to consider those problems
in finite portions of the regular hexagonal lattice.

Another interesting thing to do would be to study the behavior (and the
difference) of $\rho_{\infty}$ and $\rho_{\star}$ for different
lattices/substrates. In other words, to investigate the difference between
the jamming limit of dynamical models and the most probable densities in
the equilibrium models. A drastic example is presented by the expected
density of independent sets in stars: there are exactly two maximal independent
sets in $S_n = K_{1,n-1}$, one of them with size 1 and the other with size
$n-1$. If both of them are equally probable, the expected size is $n/2$.
Under dynamical model, however, the smaller one is much less probable than
the bigger one, and the expected size is $\frac{1}{n} + \frac{n-1}{n}(n-1)
= n-2 + \frac{2}{n}$. It would be interesting to know more about such
differences and to know for which classes of graphs they are extremal.

Our final remark is that the jammed configurations of Rydberg atoms with a
given blockade range $b$ are known as maximal $b$-independent sets in the
language of graph theory. It might be worth investigating to what extent
can similar problems be formulated also in terms of $b$-dominance in graphs.
\section*{Acknowledgments}
\noindent
It is a pleasure for us to thank Jean-Marc Luck and Pavel Krapivsky for
fruitful exchanges during the concomitant elaboration of their preprint
\cite{KL} and of the present work.
Partial support of Slovenian ARRS (Grant no. J1-3002) is gratefully
acknowledged by T. Do\v{s}li\'c.

\bibliographystyle{bababbrv-fl}
\bibliography{literature}

\begin{thebibliography}{10}
  \providebibliographyfont{name}{}%
  \providebibliographyfont{lastname}{}%
  \providebibliographyfont{title}{\emph}%
  \providebibliographyfont{jtitle}{\btxtitlefont}%
  \providebibliographyfont{etal}{\emph}%
  \providebibliographyfont{journal}{}%
  \providebibliographyfont{volume}{}%
  \providebibliographyfont{ISBN}{\MakeUppercase}%
  \providebibliographyfont{ISSN}{\MakeUppercase}%
  \providebibliographyfont{url}{\url}%
  \providebibliographyfont{numeral}{}%
  \expandafter\btxselectlanguage\expandafter {\btxfallbacklanguage}

\expandafter\btxselectlanguage\expandafter {\btxfallbacklanguage}
\bibitem {55_bartelt1993car}
\btxnamefont {M.~\btxlastnamefont {Bartelt}}, \btxnamefont
  {J.\btxfnamespaceshort W. \btxlastnamefont {Evans}}\btxandcomma {}
  \btxandshort {.}\ \btxnamefont {M.~\btxlastnamefont
  {Glasser}}\btxauthorcolon\ \btxjtitlefont {\btxifchangecase {The car-parking
  limit of random sequential adsorption: {E}xpansions in one dimension}{The
  car-parking limit of random sequential adsorption: {E}xpansions in one
  dimension}}.
\newblock \btxjournalfont {The Journal of chemical physics}, 99(2):1438--1439,
  1993.

\bibitem {35_baule2018edwards}
\btxnamefont {A.~\btxlastnamefont {Baule}}, \btxnamefont {F.~\btxlastnamefont
  {Morone}}, \btxnamefont {H.\btxfnamespaceshort J. \btxlastnamefont
  {Herrmann}}\btxandcomma {} \btxandshort {.}\ \btxnamefont
  {H.\btxfnamespaceshort A. \btxlastnamefont {Makse}}\btxauthorcolon\
  \btxjtitlefont {\btxifchangecase {Edwards statistical mechanics for jammed
  granular matter}{Edwards statistical mechanics for jammed granular matter}}.
\newblock \btxjournalfont {Reviews of modern physics}, 90(1):015006, 2018.

\bibitem {42_bernien2017probing}
\btxnamefont {H.~\btxlastnamefont {Bernien}}, \btxnamefont {S.~\btxlastnamefont
  {Schwartz}}, \btxnamefont {A.~\btxlastnamefont {Keesling}}, \btxnamefont
  {H.~\btxlastnamefont {Levine}}, \btxnamefont {A.~\btxlastnamefont {Omran}},
  \btxnamefont {H.~\btxlastnamefont {Pichler}}, \btxnamefont
  {S.~\btxlastnamefont {Choi}}, \btxnamefont {A.\btxfnamespaceshort S.
  \btxlastnamefont {Zibrov}}, \btxnamefont {M.~\btxlastnamefont {Endres}},
  \btxnamefont {M.~\btxlastnamefont {Greiner}}\btxandcomma {} \btxetalfont
  {\btxetalshort {.}}\btxauthorcolon\ \btxjtitlefont {\btxifchangecase {Probing
  many-body dynamics on a 51-atom quantum simulator}{Probing many-body dynamics
  on a 51-atom quantum simulator}}.
\newblock \btxjournalfont {Nature}, 551(7682):579--584, 2017.

\bibitem {7_berthier2011theoretical}
\btxnamefont {L.~\btxlastnamefont {Berthier}} \btxandshort {.}\ \btxnamefont
  {G.~\btxlastnamefont {Biroli}}\btxauthorcolon\ \btxjtitlefont
  {\btxifchangecase {Theoretical perspective on the glass transition and
  amorphous materials}{Theoretical perspective on the glass transition and
  amorphous materials}}.
\newblock \btxjournalfont {Reviews of modern physics}, 83(2):587, 2011.

\bibitem {5_biroli2000inherent}
\btxnamefont {G.~\btxlastnamefont {Biroli}} \btxandshort {.}\ \btxnamefont
  {R.~\btxlastnamefont {Monasson}}\btxauthorcolon\ \btxjtitlefont
  {\btxifchangecase {From inherent structures to pure states: {S}ome simple
  remarks and examples}{From inherent structures to pure states: {S}ome simple
  remarks and examples}}.
\newblock \btxjournalfont {Europhysics Letters}, 50(2):155, 2000.

\bibitem {56_bonnier1994pair}
\btxnamefont {B.~\btxlastnamefont {Bonnier}}, \btxnamefont {D.~\btxlastnamefont
  {Boyer}}\btxandcomma {} \btxandshort {.}\ \btxnamefont {P.~\btxlastnamefont
  {Viot}}\btxauthorcolon\ \btxjtitlefont {\btxifchangecase {Pair correlation
  function in random sequential adsorption processes}{Pair correlation function
  in random sequential adsorption processes}}.
\newblock \btxjournalfont {Journal of Physics A: Mathematical and General},
  27(11):3671, 1994.

\bibitem {chern}
\btxnamefont {H.\btxfnamespaceshort H. \btxlastnamefont {Chern}}, \btxnamefont
  {H.\btxfnamespaceshort K. \btxlastnamefont {Hwang}}\btxandcomma {}
  \btxandshort {.}\ \btxnamefont {T.\btxfnamespaceshort H. \btxlastnamefont
  {Tsai}}\btxauthorcolon\ \btxjtitlefont {\btxifchangecase {Random unfriendly
  seating arrangement in a dining table}{Random unfriendly seating arrangement
  in a dining table}}.
\newblock \btxjournalfont {Adv. Appl. Math.}, 65:38--64, 2015.

\bibitem {10_cornell1991domain}
\btxnamefont {S.\btxfnamespaceshort J. \btxlastnamefont {Cornell}},
  \btxnamefont {K.~\btxlastnamefont {Kaski}}\btxandcomma {} \btxandshort {.}\
  \btxnamefont {R.\btxfnamespaceshort B. \btxlastnamefont
  {Stinchcombe}}\btxauthorcolon\ \btxjtitlefont {\btxifchangecase {Domain
  scaling and glassy dynamics in a one-dimensional {K}awasaki {I}sing
  model}{Domain scaling and glassy dynamics in a one-dimensional {K}awasaki
  {I}sing model}}.
\newblock \btxjournalfont {Physical Review B}, 44(22):12263, 1991.

\bibitem {17_crisanti2000inherent}
\btxnamefont {A.~\btxlastnamefont {Crisanti}}, \btxnamefont
  {F.~\btxlastnamefont {Ritort}}, \btxnamefont {A.~\btxlastnamefont
  {Rocco}}\btxandcomma {} \btxandshort {.}\ \btxnamefont {M.~\btxlastnamefont
  {Sellitto}}\btxauthorcolon\ \btxjtitlefont {\btxifchangecase {Inherent
  structures and nonequilibrium dynamics of one-dimensional constrained kinetic
  models: a comparison study}{Inherent structures and nonequilibrium dynamics
  of one-dimensional constrained kinetic models: a comparison study}}.
\newblock \btxjournalfont {The Journal of Chemical Physics},
  113(23):10615--10634, 2000.

\bibitem {csiszar2004information}
\btxnamefont {I.~\btxlastnamefont {Csisz{\'a}r}}, \btxnamefont
  {P.\btxfnamespaceshort C. \btxlastnamefont {Shields}}\btxandcomma {}
  \btxetalfont {\btxetalshort {.}}\btxauthorcolon\ \btxjtitlefont
  {\btxifchangecase {Information theory and statistics: {A}
  tutorial}{Information theory and statistics: {A} tutorial}}.
\newblock \btxjournalfont {Foundations and Trends in Communications and
  Information Theory}, 1(4):417--528, 2004.

\bibitem {11_de2003metastable}
\btxnamefont {G.~\btxlastnamefont {De~Smedt}}, \btxnamefont
  {C.~\btxlastnamefont {Godreche}}\btxandcomma {} \btxandshort {.}\
  \btxnamefont {J.~\btxlastnamefont {Luck}}\btxauthorcolon\ \btxjtitlefont
  {\btxifchangecase {Metastable states of the {I}sing chain with {K}awasaki
  dynamics}{Metastable states of the {I}sing chain with {K}awasaki dynamics}}.
\newblock \btxjournalfont {The European Physical Journal B-Condensed Matter and
  Complex Systems}, 32:215--225, 2003.

\bibitem {22_de2002jamming}
\btxnamefont {G.~\btxlastnamefont {De~Smedt}}, \btxnamefont
  {C.~\btxlastnamefont {Godreche}}\btxandcomma {} \btxandshort {.}\
  \btxnamefont {J.\btxfnamespaceshort M. \btxlastnamefont
  {Luck}}\btxauthorcolon\ \btxjtitlefont {\btxifchangecase {Jamming, freezing
  and metastability in one-dimensional spin systems}{Jamming, freezing and
  metastability in one-dimensional spin systems}}.
\newblock \btxjournalfont {The European Physical Journal B-Condensed Matter and
  Complex Systems}, 27:363--380, 2002.

\bibitem {19_dean2001steady}
\btxnamefont {D.\btxfnamespaceshort S. \btxlastnamefont {Dean}} \btxandshort
  {.}\ \btxnamefont {A.~\btxlastnamefont {Lefevre}}\btxauthorcolon\
  \btxjtitlefont {\btxifchangecase {Steady state behavior of mechanically
  perturbed spin glasses and ferromagnets}{Steady state behavior of
  mechanically perturbed spin glasses and ferromagnets}}.
\newblock \btxjournalfont {Physical Review E}, 64(4):046110, 2001.

\bibitem {18_dean2001tapping}
\btxnamefont {D.\btxfnamespaceshort S. \btxlastnamefont {Dean}} \btxandshort
  {.}\ \btxnamefont {A.~\btxlastnamefont {Lefevre}}\btxauthorcolon\
  \btxjtitlefont {\btxifchangecase {Tapping spin glasses and ferromagnets on
  random graphs}{Tapping spin glasses and ferromagnets on random graphs}}.
\newblock \btxjournalfont {Physical Review Letters}, 86(25):5639, 2001.

\bibitem {6_debenedetti2001supercooled}
\btxnamefont {P.\btxfnamespaceshort G. \btxlastnamefont {Debenedetti}}
  \btxandshort {.}\ \btxnamefont {F.\btxfnamespaceshort H. \btxlastnamefont
  {Stillinger}}\btxauthorcolon\ \btxjtitlefont {\btxifchangecase {Supercooled
  liquids and the glass transition}{Supercooled liquids and the glass
  transition}}.
\newblock \btxjournalfont {Nature}, 410(6825):259--267, 2001.

\bibitem {12_derrida1986metastable}
\btxnamefont {B.~\btxlastnamefont {Derrida}} \btxandshort {.}\ \btxnamefont
  {E.~\btxlastnamefont {Gardner}}\btxauthorcolon\ \btxjtitlefont
  {\btxifchangecase {Metastable states of a spin glass chain at 0
  temperature}{Metastable states of a spin glass chain at 0 temperature}}.
\newblock \btxjournalfont {Journal de physique}, 47(6):959--965, 1986.

\bibitem {Doslic}
\btxnamefont {T.~\btxlastnamefont {Do\v{s}li\'{c}}}\btxauthorcolon\
  \btxjtitlefont {\btxifchangecase {Block allocation of a sequential
  resource}{Block allocation of a sequential resource}}.
\newblock \btxjournalfont {Ars Math. Contemp.}, 17(1):79--88,
  2019\ifbtxprintISSN {, \mbox{\btxISSN~\btxISSNfont {1855-3966}}}.
\newblock {\latintext
  \btxurlfont{https://doi.org/10.26493/1855-3974.1508.f8c}}.

\bibitem {DPSZ}
\btxnamefont {T.~\btxlastnamefont {Do\v{s}li\'{c}}}, \btxnamefont
  {M.~\btxlastnamefont {Puljiz}}, \btxnamefont {S.~\btxlastnamefont
  {\v{S}ebek}}\btxandcomma {} \btxandshort {.}\ \btxnamefont
  {J.~\btxlastnamefont {\v{Z}ubrini\'{c}}}\btxauthorcolon\ \btxjtitlefont
  {\btxifchangecase {On a variant of {F}lory model}{On a variant of {F}lory
  model}}.
\newblock 2022.
\newblock {\latintext \btxurlfont{https://arxiv.org/abs/2210.12411}}.

\bibitem {DoslicZubac}
\btxnamefont {T.~\btxlastnamefont {Do\v{s}li\'{c}}} \btxandshort {.}\
  \btxnamefont {I.~\btxlastnamefont {Zubac}}\btxauthorcolon\ \btxjtitlefont
  {\btxifchangecase {Counting maximal matchings in linear polymers}{Counting
  maximal matchings in linear polymers}}.
\newblock \btxjournalfont {Ars Math. Contemp.}, 11(2):255--276,
  2016\ifbtxprintISSN {, \mbox{\btxISSN~\btxISSNfont {1855-3966}}}.
\newblock {\latintext \btxurlfont{https://doi.org/10.26493/1855-3974.851.167}}.

\bibitem {Dunning_Killian_2021}
\btxnamefont {F.\btxfnamespaceshort B. \btxlastnamefont {Dunning}} \btxandshort
  {.}\ \btxnamefont {T.\btxfnamespaceshort C. \btxlastnamefont
  {Killian}}\btxauthorcolon\ \btxtitlefont {\btxifchangecase {Rydberg atoms:
  Giants of the atomic world}{Rydberg Atoms: Giants of the Atomic World}},
  2021.
\newblock {\latintext \btxurlfont{http://dx.doi.org/10.33548/SCIENTIA679}}.

\bibitem {24_elskens1987aggregation}
\btxnamefont {Y.~\btxlastnamefont {Elskens}} \btxandshort {.}\ \btxnamefont
  {H.\btxfnamespaceshort L. \btxlastnamefont {Frisch}}\btxauthorcolon\
  \btxjtitlefont {\btxifchangecase {Aggregation kinetics for a one-dimensional
  zero-degree {K}elvin model of spinodal decomposition}{Aggregation kinetics
  for a one-dimensional zero-degree {K}elvin model of spinodal decomposition}}.
\newblock \btxjournalfont {Journal of statistical physics}, 48:1243--1248,
  1987.

\bibitem {28_evans1993random}
\btxnamefont {J.\btxfnamespaceshort W. \btxlastnamefont
  {Evans}}\btxauthorcolon\ \btxjtitlefont {\btxifchangecase {Random and
  cooperative sequential adsorption}{Random and cooperative sequential
  adsorption}}.
\newblock \btxjournalfont {Reviews of modern physics}, 65(4):1281, 1993.

\bibitem {FlajoletSedgewick}
\btxnamefont {P.~\btxlastnamefont {Flajolet}} \btxandshort {.}\ \btxnamefont
  {R.~\btxlastnamefont {Sedgewick}}\btxauthorcolon\ \btxtitlefont {Analytic
  combinatorics}.
\newblock \btxpublisherfont {Cambridge University Press, Cambridge},
  2009\ifbtxprintISBN {, \mbox{\btxISBN~\btxISBNfont {978-0-521-89806-5}}}.
\newblock {\latintext \btxurlfont{https://doi.org/10.1017/CBO9780511801655}}.

\bibitem {57_flory1939intramolecular}
\btxnamefont {P.\btxfnamespaceshort J. \btxlastnamefont
  {Flory}}\btxauthorcolon\ \btxjtitlefont {\btxifchangecase {Intramolecular
  reaction between neighboring substituents of vinyl polymers}{Intramolecular
  reaction between neighboring substituents of vinyl polymers}}.
\newblock \btxjournalfont {Journal of the American Chemical Society},
  61(6):1518--1521, 1939.

\bibitem {14_fredrickson1984kinetic}
\btxnamefont {G.\btxfnamespaceshort H. \btxlastnamefont {Fredrickson}}
  \btxandshort {.}\ \btxnamefont {H.\btxfnamespaceshort C. \btxlastnamefont
  {Andersen}}\btxauthorcolon\ \btxjtitlefont {\btxifchangecase {Kinetic {I}sing
  model of the glass transition}{Kinetic {I}sing model of the glass
  transition}}.
\newblock \btxjournalfont {Physical review letters}, 53(13):1244, 1984.

\bibitem {gallagher_1994}
\btxnamefont {T.\btxfnamespaceshort F. \btxlastnamefont
  {Gallagher}}\btxauthorcolon\ \btxtitlefont {Rydberg Atoms}.
\newblock Cambridge Monographs on Atomic, Molecular and Chemical Physics.
  \btxpublisherfont {Cambridge University Press}, 1994.

\bibitem {georgiu}
\btxnamefont {K.~\btxlastnamefont {Georgiou}}, \btxnamefont
  {E.~\btxlastnamefont {Kranakis}}\btxandcomma {} \btxandshort {.}\
  \btxnamefont {D.~\btxlastnamefont {Krizanc}}\btxauthorcolon\ \btxjtitlefont
  {\btxifchangecase {Random maximal independent sets and the unfriendly theater
  seating arrangement problem}{Random maximal independent sets and the
  unfriendly theater seating arrangement problem}}.
\newblock \btxjournalfont {Discrete Math.}, 309:5120--5129, 2009.

\bibitem {GerinPRparkingHAL}
\btxnamefont {L.~\btxlastnamefont {Gerin}}\btxauthorcolon\ \btxjtitlefont
  {\btxifchangecase {The {P}age-{R}\'{e}nyi parking process}{The
  {P}age-{R}\'{e}nyi parking process}}.
\newblock \btxjournalfont {Electron. J. Combin.}, 22(4):Paper 4.4, 13, 2015.

\bibitem {54_gonzalez1974cooperative}
\btxnamefont {J.\btxfnamespaceshort J. \btxlastnamefont {Gonz{\'a}lez}},
  \btxnamefont {P.~\btxlastnamefont {Hemmer}}\btxandcomma {} \btxandshort {.}\
  \btxnamefont {J.~\btxlastnamefont {H{\o}ye}}\btxauthorcolon\ \btxjtitlefont
  {\btxifchangecase {Cooperative effects in random sequential polymer
  reactions}{Cooperative effects in random sequential polymer reactions}}.
\newblock \btxjournalfont {Chemical Physics}, 3(2):228--238, 1974.

\bibitem {4_gotze1992relaxation}
\btxnamefont {W.~\btxlastnamefont {Gotze}} \btxandshort {.}\ \btxnamefont
  {L.~\btxlastnamefont {Sjogren}}\btxauthorcolon\ \btxjtitlefont
  {\btxifchangecase {Relaxation processes in supercooled liquids}{Relaxation
  processes in supercooled liquids}}.
\newblock \btxjournalfont {Reports on progress in Physics}, 55(3):241, 1992.

\bibitem {41_hofmann2013sub}
\btxnamefont {C.\btxfnamespaceshort S. \btxlastnamefont {Hofmann}},
  \btxnamefont {G.~\btxlastnamefont {G{\"u}nter}}, \btxnamefont
  {H.~\btxlastnamefont {Schempp}}, \btxnamefont {M.~Robert-de \btxlastnamefont
  {Saint-Vincent}}, \btxnamefont {M.~\btxlastnamefont {G{\"a}rttner}},
  \btxnamefont {J.~\btxlastnamefont {Evers}}, \btxnamefont {S.~\btxlastnamefont
  {Whitlock}}\btxandcomma {} \btxandshort {.}\ \btxnamefont
  {M.~\btxlastnamefont {Weidem{\"u}ller}}\btxauthorcolon\ \btxjtitlefont
  {\btxifchangecase {Sub-poissonian statistics of {R}ydberg-interacting
  dark-state polaritons}{Sub-Poissonian statistics of {R}ydberg-interacting
  dark-state polaritons}}.
\newblock \btxjournalfont {Physical Review Letters}, 110(20):203601, 2013.

\bibitem {15_jackle1991hierarchically}
\btxnamefont {J.~\btxlastnamefont {J{\"a}ckle}} \btxandshort {.}\ \btxnamefont
  {S.~\btxlastnamefont {Eisinger}}\btxauthorcolon\ \btxjtitlefont
  {\btxifchangecase {A hierarchically constrained kinetic {I}sing model}{A
  hierarchically constrained kinetic {I}sing model}}.
\newblock \btxjournalfont {Zeitschrift f{\"u}r physik B condensed matter},
  84(1):115--124, 1991.

\bibitem {37_jaksch2000fast}
\btxnamefont {D.~\btxlastnamefont {Jaksch}}, \btxnamefont
  {J.\btxfnamespaceshort I. \btxlastnamefont {Cirac}}, \btxnamefont
  {P.~\btxlastnamefont {Zoller}}, \btxnamefont {S.\btxfnamespaceshort L.
  \btxlastnamefont {Rolston}}, \btxnamefont {R.~\btxlastnamefont
  {C{\^o}t{\'e}}}\btxandcomma {} \btxandshort {.}\ \btxnamefont
  {M.\btxfnamespaceshort D. \btxlastnamefont {Lukin}}\btxauthorcolon\
  \btxjtitlefont {\btxifchangecase {Fast quantum gates for neutral atoms}{Fast
  quantum gates for neutral atoms}}.
\newblock \btxjournalfont {Physical Review Letters}, 85(10):2208, 2000.

\bibitem {2_kirkpatrick1978infinite}
\btxnamefont {S.~\btxlastnamefont {Kirkpatrick}} \btxandshort {.}\ \btxnamefont
  {D.~\btxlastnamefont {Sherrington}}\btxauthorcolon\ \btxjtitlefont
  {\btxifchangecase {Infinite-ranged models of spin-glasses}{Infinite-ranged
  models of spin-glasses}}.
\newblock \btxjournalfont {Physical Review B}, 17(11):4384, 1978.

\bibitem {kranakis}
\btxnamefont {E.~\btxlastnamefont {Kranakis}} \btxandshort {.}\ \btxnamefont
  {D.~\btxlastnamefont {Krizanc}}\btxauthorcolon\ \btxjtitlefont
  {\btxifchangecase {Maintaining privacy on a line}{Maintaining privacy on a
  line}}.
\newblock \btxjournalfont {Theory Comput. Syst.}, 50:147--157, 2012.

\bibitem {27_krapivsky1994kinetic}
\btxnamefont {P.~\btxlastnamefont {Krapivsky}}\btxauthorcolon\ \btxjtitlefont
  {\btxifchangecase {Kinetic models of a binary alloy at zero
  temperature}{Kinetic models of a binary alloy at zero temperature}}.
\newblock \btxjournalfont {Journal of statistical physics}, 74:1211--1225,
  1994.

\bibitem {46_krapivsky2020large}
\btxnamefont {P.~\btxlastnamefont {Krapivsky}}\btxauthorcolon\ \btxjtitlefont
  {\btxifchangecase {Large deviations in one-dimensional random sequential
  adsorption}{Large deviations in one-dimensional random sequential
  adsorption}}.
\newblock \btxjournalfont {Physical Review E}, 102(6):062108, 2020.

\bibitem {KL}
\btxnamefont {P.~\btxlastnamefont {Krapivsky}} \btxandshort {.}\ \btxnamefont
  {J.~\btxlastnamefont {Luck}}\btxauthorcolon\ \btxjtitlefont {\btxifchangecase
  {A renewal approach to configurational entropy in one dimension}{A renewal
  approach to configurational entropy in one dimension}}.
\newblock 2023.
\newblock {\latintext \btxurlfont{https://arxiv.org/abs/2302.08852}}.

\bibitem {Krapivsky_2013}
\btxnamefont {P.\btxfnamespaceshort L. \btxlastnamefont
  {Krapivsky}}\btxauthorcolon\ \btxjtitlefont {\btxifchangecase {Dynamics of
  repulsion processes}{Dynamics of repulsion processes}}.
\newblock \btxjournalfont {Journal of Statistical Mechanics: Theory and
  Experiment}, 2013(06):P06012, \btxprintmonthyear{.}{jun}{2013}{short}.
\newblock {\latintext
  \btxurlfont{https://dx.doi.org/10.1088/1742-5468/2013/06/P06012}}.

\bibitem {30_krapivsky2010kinetic}
\btxnamefont {P.\btxfnamespaceshort L. \btxlastnamefont {Krapivsky}},
  \btxnamefont {S.~\btxlastnamefont {Redner}}\btxandcomma {} \btxandshort {.}\
  \btxnamefont {E.~\btxlastnamefont {Ben-Naim}}\btxauthorcolon\ \btxtitlefont
  {A kinetic view of statistical physics}.
\newblock \btxpublisherfont {Cambridge University Press}, 2010.

\bibitem {20_lefevre2001tapping}
\btxnamefont {A.~\btxlastnamefont {Lefevre}} \btxandshort {.}\ \btxnamefont
  {D.\btxfnamespaceshort S. \btxlastnamefont {Dean}}\btxauthorcolon\
  \btxjtitlefont {\btxifchangecase {Tapping thermodynamics of the
  one-dimensional {I}sing model}{Tapping thermodynamics of the one-dimensional
  {I}sing model}}.
\newblock \btxjournalfont {Journal of Physics A: Mathematical and General},
  34(14):L213, 2001.

\bibitem {38_liebisch2005atom}
\btxnamefont {T.\btxfnamespaceshort C. \btxlastnamefont {Liebisch}},
  \btxnamefont {A.~\btxlastnamefont {Reinhard}}, \btxnamefont
  {P.~\btxlastnamefont {Berman}}\btxandcomma {} \btxandshort {.}\ \btxnamefont
  {G.~\btxlastnamefont {Raithel}}\btxauthorcolon\ \btxjtitlefont
  {\btxifchangecase {Atom counting statistics in ensembles of interacting
  {R}ydberg atoms}{Atom counting statistics in ensembles of interacting
  {R}ydberg atoms}}.
\newblock \btxjournalfont {Physical review letters}, 95(25):253002, 2005.

\bibitem {26_lin1993exact}
\btxnamefont {J.\btxfnamespaceshort C. \btxlastnamefont {Lin}} \btxandshort
  {.}\ \btxnamefont {P.~\btxlastnamefont {Taylor}}\btxauthorcolon\
  \btxjtitlefont {\btxifchangecase {Exact solution of a phase-separation model
  with conserved-order-parameter dynamics and arbitrary initial
  concentration}{Exact solution of a phase-separation model with
  conserved-order-parameter dynamics and arbitrary initial concentration}}.
\newblock \btxjournalfont {Physical Review E}, 48(6):4305, 1993.

\btxselectlanguage {english}
\bibitem {SymbDynCoding}
\btxnamefont {D.~\btxlastnamefont {{Lind}}} \btxandshort {.}\ \btxnamefont
  {B.~\btxlastnamefont {{Marcus}}}\btxauthorcolon\ \btxtitlefont {{An
  introduction to symbolic dynamics and coding}}.
\newblock \btxpublisherfont {Cambridge: Cambridge University Press},
  2021\ifbtxprintISBN {, \mbox{\btxISBN~\btxISBNfont {978-1-108-82028-8;
  978-1-108-89972-7}}}.

\expandafter\btxselectlanguage\expandafter {\btxfallbacklanguage}
\bibitem {13_masui1989metastable}
\btxnamefont {S.~\btxlastnamefont {Masui}}, \btxnamefont {B.~\btxlastnamefont
  {Southern}}\btxandcomma {} \btxandshort {.}\ \btxnamefont
  {A.~\btxlastnamefont {Jacobs}}\btxauthorcolon\ \btxjtitlefont
  {\btxifchangecase {Metastable states of {I}sing spin glasses and random
  ferromagnets}{Metastable states of {I}sing spin glasses and random
  ferromagnets}}.
\newblock \btxjournalfont {Physical Review B}, 39(10):6925, 1989.

\bibitem {3_mezard1987spin}
\btxnamefont {M.~\btxlastnamefont {M{\'e}zard}}, \btxnamefont
  {G.~\btxlastnamefont {Parisi}}\btxandcomma {} \btxandshort {.}\ \btxnamefont
  {M.\btxfnamespaceshort A. \btxlastnamefont {Virasoro}}\btxauthorcolon\
  \btxtitlefont {Spin glass theory and beyond: {A}n Introduction to the Replica
  Method and Its Applications}, \btxvolumeshort {.}~\btxvolumefont {9}.
\newblock \btxpublisherfont {World Scientific Publishing Company}, 1987.

\bibitem {Page}
\btxnamefont {E.\btxfnamespaceshort S. \btxlastnamefont {Page}}\btxauthorcolon\
  \btxjtitlefont {\btxifchangecase {The distribution of vacancies on a
  line}{The distribution of vacancies on a line}}.
\newblock \btxjournalfont {J. Roy. Statist. Soc. Ser. B}, 21:364--374, 1959.

\bibitem {23_palmer1985low}
\btxnamefont {R.~\btxlastnamefont {Palmer}} \btxandshort {.}\ \btxnamefont
  {H.~\btxlastnamefont {Frisch}}\btxauthorcolon\ \btxjtitlefont
  {\btxifchangecase {Low-and high-dimension limits of a phase separation
  model}{Low-and high-dimension limits of a phase separation model}}.
\newblock \btxjournalfont {Journal of statistical physics}, 38:867--872, 1985.

\bibitem {39_pohl2010dynamical}
\btxnamefont {T.~\btxlastnamefont {Pohl}}, \btxnamefont {E.~\btxlastnamefont
  {Demler}}\btxandcomma {} \btxandshort {.}\ \btxnamefont
  {M.\btxfnamespaceshort D. \btxlastnamefont {Lukin}}\btxauthorcolon\
  \btxjtitlefont {\btxifchangecase {Dynamical crystallization in the dipole
  blockade of ultracold atoms}{Dynamical crystallization in the dipole blockade
  of ultracold atoms}}.
\newblock \btxjournalfont {Physical review letters}, 104(4):043002, 2010.

\bibitem {21_prados2001analytical}
\btxnamefont {A.~\btxlastnamefont {Prados}} \btxandshort {.}\ \btxnamefont
  {J.\btxfnamespaceshort J. \btxlastnamefont {Brey}}\btxauthorcolon\
  \btxjtitlefont {\btxifchangecase {Analytical solution of a one-dimensional
  {I}sing model with zero-temperature dynamics}{Analytical solution of a
  one-dimensional {I}sing model with zero-temperature dynamics}}.
\newblock \btxjournalfont {Journal of Physics A: Mathematical and General},
  34(33):L453, 2001.

\bibitem {25_privman1992exact}
\btxnamefont {V.~\btxlastnamefont {Privman}}\btxauthorcolon\ \btxjtitlefont
  {\btxifchangecase {Exact solution of a phase separation model with conserved
  order parameter dynamics}{Exact solution of a phase separation model with
  conserved order parameter dynamics}}.
\newblock \btxjournalfont {Physical review letters}, 69(25):3686, 1992.

\bibitem {58_renyi1958one}
\btxnamefont {A.~\btxlastnamefont {R{\'e}nyi}}\btxauthorcolon\ \btxjtitlefont
  {\btxifchangecase {On a one-dimensional problem concerning space-filling}{On
  a one-dimensional problem concerning space-filling}}.
\newblock \btxjournalfont {Publ. Math. Inst. Hungar. Acad. Sci.}, 3:109--127,
  1958.

\bibitem {36_saffman2010quantum}
\btxnamefont {M.~\btxlastnamefont {Saffman}}, \btxnamefont
  {T.\btxfnamespaceshort G. \btxlastnamefont {Walker}}\btxandcomma {}
  \btxandshort {.}\ \btxnamefont {K.~\btxlastnamefont
  {M{\o}lmer}}\btxauthorcolon\ \btxjtitlefont {\btxifchangecase {Quantum
  information with {R}ydberg atoms}{Quantum information with {R}ydberg atoms}}.
\newblock \btxjournalfont {Reviews of modern physics}, 82(3):2313, 2010.

\bibitem {16_sollich1999glassy}
\btxnamefont {P.~\btxlastnamefont {Sollich}} \btxandshort {.}\ \btxnamefont
  {M.\btxfnamespaceshort R. \btxlastnamefont {Evans}}\btxauthorcolon\
  \btxjtitlefont {\btxifchangecase {Glassy time-scale divergence and anomalous
  coarsening in a kinetically constrained spin chain}{Glassy time-scale
  divergence and anomalous coarsening in a kinetically constrained spin
  chain}}.
\newblock \btxjournalfont {Physical review letters}, 83(16):3238, 1999.

\bibitem {Stanley}
\btxnamefont {R.\btxfnamespaceshort P. \btxlastnamefont
  {Stanley}}\btxauthorcolon\ \btxtitlefont {Enumerative combinatorics. {V}olume
  1}, \btxvolumeshort {.}~\btxvolumefont {49} \btxofseriesshort {.}\
  \btxtitlefont {Cambridge Studies in Advanced Mathematics}.
\newblock \btxpublisherfont {Cambridge University Press, Cambridge},
  \btxeditionnumshort {second}{.}, 2012\ifbtxprintISBN {,
  \mbox{\btxISBN~\btxISBNfont {978-1-107-60262-5}}}.

\bibitem {29_talbot2000car}
\btxnamefont {J.~\btxlastnamefont {Talbot}}, \btxnamefont {G.~\btxlastnamefont
  {Tarjus}}, \btxnamefont {P.~\btxlastnamefont {Van~Tassel}}\btxandcomma {}
  \btxandshort {.}\ \btxnamefont {P.~\btxlastnamefont {Viot}}\btxauthorcolon\
  \btxjtitlefont {\btxifchangecase {From car parking to protein adsorption: an
  overview of sequential adsorption processes}{From car parking to protein
  adsorption: an overview of sequential adsorption processes}}.
\newblock \btxjournalfont {Colloids and Surfaces A: Physicochemical and
  Engineering Aspects}, 165(1-3):287--324, 2000.

\bibitem {1_thouless1977solution}
\btxnamefont {D.\btxfnamespaceshort J. \btxlastnamefont {Thouless}},
  \btxnamefont {P.\btxfnamespaceshort W. \btxlastnamefont
  {Anderson}}\btxandcomma {} \btxandshort {.}\ \btxnamefont
  {R.\btxfnamespaceshort G. \btxlastnamefont {Palmer}}\btxauthorcolon\
  \btxjtitlefont {\btxifchangecase {Solution of `{S}olvable model of a spin
  glass'}{Solution of `{S}olvable model of a spin glass'}}.
\newblock \btxjournalfont {The Philosophical Magazine: A Journal of Theoretical
  Experimental and Applied Physics}, 35(3):593--601, 1977.
\newblock {\latintext \btxurlfont{https://doi.org/10.1080/14786437708235992}}.

\bibitem {40_viteau2012cooperative}
\btxnamefont {M.~\btxlastnamefont {Viteau}}, \btxnamefont {P.~\btxlastnamefont
  {Huillery}}, \btxnamefont {M.\btxfnamespaceshort G. \btxlastnamefont
  {Bason}}, \btxnamefont {N.~\btxlastnamefont {Malossi}}, \btxnamefont
  {D.~\btxlastnamefont {Ciampini}}, \btxnamefont {O.~\btxlastnamefont
  {Morsch}}, \btxnamefont {E.~\btxlastnamefont {Arimondo}}, \btxnamefont
  {D.~\btxlastnamefont {Comparat}}\btxandcomma {} \btxandshort {.}\
  \btxnamefont {P.~\btxlastnamefont {Pillet}}\btxauthorcolon\ \btxjtitlefont
  {\btxifchangecase {Cooperative excitation and many-body interactions in a
  cold {R}ydberg gas}{Cooperative excitation and many-body interactions in a
  cold {R}ydberg gas}}.
\newblock \btxjournalfont {Physical Review Letters}, 109(5):053002, 2012.

\end{thebibliography}

\end{document}